\newcommand{\N}{\mathbb{N}} 
\newcommand{\bsx}{\boldsymbol{x}}    
  \providecommand*{\toclevel@author}{999}
  \providecommand*{\toclevel@title}{0}
\begin{document}

\newcommand{\goinnerprodsy}[2]{({#1 \bullet #2})}
\newcommand{\msWF}[2]{\mathcal{W}(#1;#2)}
\newcommand{\DmsWF}[1]{\mathcal{W}(#1; \mu)}
\newcommand{\YmsWF}[1]{\mathcal{W}(#1; \mu+h)}

\newcommand{\mydata}[2]{&\begin{tabular}{r}$#1$\\$#2$\end{tabular}}

\title*{The Mean Square Quasi-Monte Carlo Error for Digitally Shifted Digital Nets}
\author{Takashi Goda \and Ryuichi Ohori \and Kosuke Suzuki \and Takehito Yoshiki}
\institute{
Takashi Goda
\at Graduate School of Engineering, The University of Tokyo, 7-3-1 Hongo, Bunkyo-ku, Tokyo 113-8656, Japan 
\email{goda@frcer.t.u-tokyo.ac.jp}
\and 
Ryuichi Ohori \and Kosuke Suzuki \and Takehito Yoshiki
\at Graduate School of Mathematical Sciences, The University of Tokyo, 3-8-1 Komaba, Meguro-ku, Tokyo 153-8914, Japan 
\email{ohori@ms.u-tokyo.ac.jp}, \email{ksuzuki@ms.u-tokyo.ac.jp}, \email{yosiki@ms.u-tokyo.ac.jp}
}

\maketitle

\abstract{
In this paper, we study randomized quasi-Monte Carlo (QMC) integration using digitally shifted digital nets.
We express the mean square QMC error of the $n$-th discrete approximation $f_n$ of a function $f\colon[0,1)^s\to \mathbb{R}$ for digitally shifted digital nets in terms of the Walsh coefficients of $f$.
We then apply a bound on the Walsh coefficients for sufficiently smooth integrands to obtain a quality measure called {\em Walsh figure of merit for root mean square error}, which satisfies a Koksma-Hlawka type inequality on the root mean square error.
Through two types of experiments, we confirm that our quality measure is of use for finding digital nets which show good convergence behaviors of the root mean square error for smooth integrands.
}
\section{Introduction}
Quasi-Monte Carlo (QMC) integration is one of the well-known methods for high-dimensional numerical integration \cite{Dick2010dna,Niederreiter1992rng}.
Let $\mathcal{P}$ be a point set in the $s$-dimensional unit cube $[0,1)^s$ with finite cardinality $|\mathcal{P}|$, and $f\colon[0,1)^s \to \mathbb{R}$ a Riemann integrable function.
The QMC integration by $\mathcal{P}$ gives an approximation of $I(f):=\int_{[0,1)^s}f(\bsx)\,d\bsx$ by the average $I_{\mathcal{P}}(f):=|\mathcal{P}|^{-1}\sum_{\bsx \in \mathcal{P}}f(\bsx)$.

We often consider a special class of point sets, namely digital nets \cite{Dick2010dna}.
Let $\mathbb{Z}_b = \mathbb{Z}/b\mathbb{Z}$ be the residue class ring modulo $b$, which is identified with the set $\{0,\ldots,b-1\}$, and $\mathbb Z_b^{s\times n}$ the set of $s \times n$ matrices over $\mathbb{Z}_b$ for a positive integer $n$.
We define the function $\psi \colon\mathbb Z_b^{s\times n} \ni X=(x_{i,j}) \mapsto \boldsymbol x=(\sum_{j=1}^{n}x_{i,j} \cdot b^{-j})_{i=1}^{s}\in [0,1)^s,$ where $x_{i,j}$ is considered to be an integer and the sum is taken in ${\mathbb R}$. 
Then a point set $\mathcal{P}\subset [0,1)^s$ is called a digital net over $\mathbb{Z}_b$ when $\mathcal{P}=\psi (P)$ for some subgroup $P\subset \mathbb{Z}_b^{s\times n}$.
Thus, we can recognize a subgroup $P\subset \mathbb{Z}_b^{s\times n}$ itself as a digital net.

Recently, the discretization $f_n$ of a function $f$ has been introduced to analyze the QMC integration in the framework of the digital computation \cite{MatsumotoSaitoMatoba}.
We define the $n$-digit discretization $f_n\colon\mathbb{Z}_b^{s\times n}\rightarrow \mathbb{R}$ by
\begin{align*}
f_n(X):=\frac{1}{\text{Vol}(I_n(X))}\int_{I_n(X)}f(\bsx) \,d\bsx ,
\end{align*}
for $X=(x_{i,j})\in \mathbb{Z}_b^{s\times n}$.
Here $I_n(X):=\prod_{i=1}^s[\sum_{j=1}^n x_{i,j}b^{-j},\sum_{j=1}^n x_{i,j}b^{-j}+b^{-n} )$.
We denote the true integral of $f_n$ by $I(f_n):=b^{-sn}\sum_{X\in \mathbb{Z}_b^{s\times n}} f_n(X)$, which indeed equals $I(f)$.

In \cite{MatsumotoSaitoMatoba}, Matsumoto, Saito and Matoba treat the QMC integration of the $n$-th discrete approximation $I_P(f_n):=|P|^{-1}\sum_{X\in P}f_n(X) $ for $b=2$.
They consider the discretized integration error $\mathrm{Err}(f_n;P):= I_P(f_n)-I(f_n)$ instead of the usual integration error $\mathrm{Err}(f;\psi(P)):= I_{\psi(P)}(f)-I(f) $.
The difference between them is called the discretization error and bounded by $\max_{X \in \mathbb Z_b^{s\times n}}|f(\psi(X)) - f_n(X)|$.
When the discretization error is negligibly small, we have $\mathrm{Err}(f_n;P)\approx \mathrm{Err}(f;\psi(P))$, which is a part of their setting we adopt.

Using Dick's result for $n$-smooth functions \cite{Dick2008wsc}, Matsumoto, Saito and Matoba \cite{MatsumotoSaitoMatoba} proved the Koksma-Hlawka type inequality for $\mathrm{Err}(f_n;P)$:
\begin{align}\label{Err_KH}
|\mathrm{Err}(f_n;P)|\leq C_{b,s,n}||f||_n \times \mathrm{WAFOM}(P),
\end{align}
where $C_{b,s,n}$ is a constant independent of $f$ and $P$, $||f||_n$ is the norm of $f$ defined in \cite{Dick2008wsc} and $\mathrm{WAFOM}(P)$ is the {\em Walsh figure of merit}, a quantity which depends only on $P$ and can be computed in $O(sn|P|)$ steps.
More recently, this result is generalized by Suzuki \cite{SuzukiMW} for digital nets over a finite abelian group $G$.

The inequality (\ref{Err_KH}) implies that if $\mathrm{WAFOM}(P)$ is small, $\mathrm{Err}(f_n;P)$ can also be small.
Since $\mathrm{WAFOM}(P)$ is efficiently computable, we can find $P$ with small $\mathrm{WAFOM}(P)$ by computer search.
Numerical experiments showed that a stochastic optimization heuristic can find $P$ with $\mathrm{WAFOM}(P)$ small enough, and that such $P$ performs well for a problem from computational finance \cite{MatsumotoSaitoMatoba}.

In this paper, as a further study of \cite{MatsumotoSaitoMatoba,SuzukiMW}, we discuss randomized QMC integration using digitally shifted digital nets for the $n$-digit discretization $f_n$.
A digitally shifted digital net $P+\sigma\subset \mathbb{Z}_b^{s\times n}$ is defined for a subgroup $P$ and an element $\sigma$ of $\mathbb{Z}_b^{s\times n}$.
Here $\sigma$ is chosen uniformly and randomly.
The randomized QMC integration by $P+\sigma$ of the $n$-digit discretization $f_n$ gives the approximation $I_{P+\sigma}(f_n)$ of $I(f_n)$.
By adding a random element $\sigma$, it becomes possible to obtain some statistical estimate on the integration error.
Such an estimate is not available for deterministic digital nets.

We note that randomized QMC integration using digitally shifted digital nets has already been studied in previous works \cite{BD09,LL02}, where a digital shift $\boldsymbol \sigma$ is chosen from $[0,1)^s$ and the QMC integration using $\mathcal{P}\oplus \boldsymbol \sigma$ is considered to give the approximation of $I(f)$.
Here $\oplus$ denotes digitwise addition modulo $b$.
It is known that the estimator $I_{\mathcal{P}\oplus \boldsymbol\sigma}(f)$ is an unbiased estimator of $I(f)$, so that the mean square QMC error for a function $f$ with respect to $\boldsymbol\sigma\in [0,1)^s$ equals the variance of the estimator.

In the $n$-digit discretized setting which we consider in this paper, it is also possible to show that the estimator $I_{P+ \sigma}(f_n)$ is an unbiased estimator of $I(f_n)$, so that the mean square QMC error for a function $f_n$ with respect to $\sigma\in \mathbb{Z}_b^{s\times n}$ equals the variance of the estimator, see Proposition~\ref{prop:ave}.
For the case where the discretization error is negligible, as in \cite{MatsumotoSaitoMatoba}, we also have
$\mathrm{Var}_{\boldsymbol \sigma\in [0,1)^s} [I_{\psi(P)\oplus \boldsymbol\sigma}(f)] \approx
 \mathrm{Var}_{\sigma \in \mathbb Z_b^{s\times n}}[I_{\psi(P+\sigma)}(f)] \approx
 \mathrm{Var}_{\sigma\in \mathbb{Z}_b^{s\times n}} [I_{P+\sigma}(f_n)]$.

The variance $\mathrm{Var}_{\sigma \in \mathbb Z_b^{s\times n}}[I_{\psi(P+\sigma)}(f)]$ is for practical computation where each real number in $[0,1)$ is represented as a finite-digit binary fraction.
The estimator $I_{\psi(P+\sigma)}(f)$ of $I(f)$ has so small a bias that the variance $\mathrm{Var}_{\sigma\in \mathbb{Z}_b^{s\times n}} [I_{\psi(P+\sigma)}(f)]$ is a good approximation of the mean square error $\mathbb E_{\sigma\in \mathbb{Z}_b^{s\times n}} [(I_{\psi(P+\sigma)}(f)-I(f))^2]$.

From the above justifications of the $n$-digit discretization for digitally shifted point sets, we focus on analyzing the variance $\mathrm{Var}_{\sigma\in \mathbb{Z}_b^{s\times n}} [I_{P+\sigma}(f_n)]$ of the estimator $I_{P+\sigma}(f_n)$.
As the main result of this paper, in Section~\ref{sec:wafom_rmse} below, we give a Koksma-Hlawka type inequality to bound the variance:
\begin{align}
\label{Var_KH}
\sqrt{\mathrm{Var}_{\sigma\in \mathbb{Z}_b^{s\times n}} [I_{P+\sigma}(f_n)]}
\leq C_{b,s,n} \|f\|_n \mathcal{W}(P;\mu ),
\end{align}
where $C_{b,s,n}$ and $\|f\|_n$ are the same as in (\ref{Err_KH}), and $\mathcal{W}(P;\mu )$ is a quantity which depends only on $P$ and can be computed in $O(sn|P|)$ steps.
Thus, similarly to $\mathrm{WAFOM}(P)$, $\mathcal{W}(P;\mu )$ can be a useful measure of digital nets.

The remainder of this paper is organized as follows.
We give some preliminaries in Section \ref{sec:Preliminaries}.
In Section 3, we consider the randomized QMC integration over a general finite abelian group $G$.
For a function $F\colon G\to \mathbb{R}$, a subgroup $P\subset G$ and an element $\sigma\in G$, we first prove the unbiasedness of the estimator $I_{P+\sigma}(F)$ as mentioned above, and then that the variance $\mathrm{Var}_{\sigma\in G}[I_{P+\sigma}(F)]$ can be written in terms of the discrete Fourier coefficients of $F$, see Theorem~\ref{thm:Var for G}.
In Section 4, we apply a bound on the Walsh coefficients for sufficiently smooth functions to the variance $\mathrm{Var}_{\sigma\in \mathbb{Z}_b^{s\times n}}[ I_{P+\sigma}(f_n)]$, and obtain a quality measure $\mathcal{W}(P;\mu )$ which satisfies a Koksma-Hlawka type inequality on the root mean square error.
By using the MacWilliams-type identity given in \cite{SuzukiMW}, we give a computable formula of $\mathcal{W}(P;\mu )$  in Section 5.
Finally, in Section 6, we conduct two types of experiments to show that our new quality measure is of use for finding digital nets which show good convergence behaviors of the root mean square error for smooth integrands.

\section{Preliminaries}\label{sec:Preliminaries}
Throughout this paper, we use the following notation.
Let $\mathbb{N}$ be the set of positive integers
and $\mathbb{N}_0 := \mathbb{N} \cup \{0\}$.
For a set $S$, we denote by $|S|$ the cardinality of $S$.
For $z \in \mathbb{C}$, we denote by $\overline{z}$ the complex conjugate of $z$.

The remainder of this section is devoted to recall the notion of character groups and discrete Fourier transformation and to see the correspondence of discrete Fourier coefficients and Walsh coefficients.
We refer to \cite{Serre1977lrf} for general information on character groups.
Let $G$ be a finite abelian group.
Let $T := \{z \in \mathbb{C} \mid |z| = 1 \}$
be the multiplicative group of complex numbers of absolute value one.
Let $\omega_b = \exp(2 \pi \sqrt{-1}/ b)$.

\begin{definition}
We define the character group of \/$G$
by \/$G^\vee := \mathrm{Hom}(G,T)$,
namely $G^\vee$ is the set of group homomorphisms
from $G$ to $T$.
\end{definition}
There is a natural pairing
$\bullet \colon G^\vee \times G \to T$,
$(h,g) \mapsto h \bullet g := h(g)$.

We can see that $\mathbb{Z}_b^\vee$ is isomorphic to $\mathbb{Z}_b$ as an abstract group.
Throughout this paper, we identify $\mathbb{Z}_b^\vee$ with $\mathbb{Z}_b$ by a pairing
$\bullet \colon \mathbb{Z}_b \times \mathbb{Z}_b \to T$,
$(h, g) \mapsto h \bullet g := \omega_b^{hg}$,
where $hg$ is the product in $\mathbb{Z}_b$.

Let $R$ be a commutative ring containing $\mathbb{C}$.
Let $f \colon G \to R$ be a function.
We define the discrete Fourier transformation of $f$ as below.

\begin{definition}
The discrete Fourier transformation of $f$ is defined by
$\widehat{f} \colon G^\vee \to R$,
$h \mapsto |G|^{-1}\sum_{g \in G}{f(g)(h \bullet g)}$.
Each value $\widehat{f}(h)$ is called a discrete Fourier coefficient.
\end{definition}

We assume that $P \subset G$ is a subgroup.
We define $P^\perp := \{h \in G^\vee \mid h \bullet g = 1 \text{ for all
} g \in P \}$.
Since
$P^\perp$ is the kernel of the surjection map $G^{\vee} \to P^{\vee}$,
we have $|P^\perp| = |G|/|P|$.
Several important properties of the discrete Fourier transformation are summarized below
(for a proof, see \cite{SuzukiMW} for example).

\begin{lemma}\label{lem:cyclicsum}
We have
\[\sum_{h \in G^\vee}{h \bullet g} =
\begin{cases}
|G| & \text{if $g = 0$},\\ 
0   & \text{if $g \neq 0$}.
\end{cases}
\]
\end{lemma}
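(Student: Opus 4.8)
The plan is to treat the two cases separately, and in the nontrivial case exploit the group structure of $G^\vee$ via a translation trick. First, suppose $g = 0$. Since every $h \in G^\vee$ is a group homomorphism, $h \bullet 0 = h(0) = 1$, so the sum equals $|G^\vee|$, and it remains only to recall that $|G^\vee| = |G|$ for a finite abelian group (this follows from the structure theorem, writing $G \cong \prod_i \Z_{m_i}$, $G^\vee \cong \prod_i \Z_{m_i}^\vee$, and $\Z_m^\vee \cong \Z_m$; see \cite{Serre1977lrf}).

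Second, suppose $g \neq 0$. The key claim is that there exists $h_0 \in G^\vee$ with $h_0 \bullet g \neq 1$, i.e.\ that characters separate the points of $G$. I would prove this by restricting attention to the cyclic subgroup $\langle g \rangle \subseteq G$, which has some order $m \geq 2$: the assignment $g^k \mapsto \omega_m^k$ is a well-defined character of $\langle g \rangle$ that is nontrivial at $g$, and it extends to a character $h_0$ of $G$ because the restriction map $G^\vee \to \langle g \rangle^\vee$ is surjective, which is the fact recalled just above with $P = \langle g \rangle$. Granting this, set $S := \sum_{h \in G^\vee} h \bullet g$. Left translation $h \mapsto h_0 h$ is a bijection of $G^\vee$, and $(h_0 h) \bullet g = (h_0 \bullet g)(h \bullet g)$ since $h \mapsto h \bullet g$ is a homomorphism $G^\vee \to T$; hence $S = \sum_{h \in G^\vee} (h_0 h) \bullet g = (h_0 \bullet g)\, S$, so $(1 - h_0 \bullet g)\, S = 0$, and because $h_0 \bullet g \neq 1$ we conclude $S = 0$.

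The main obstacle is the separation-of-points claim; everything else is a one-line manipulation. In a self-contained account this rests on the structure theorem for finite abelian groups, or equivalently on the surjectivity of $G^\vee \to P^\vee$ already invoked in this section. If one prefers a completely explicit argument, the whole lemma can alternatively be derived by fixing a decomposition $G \cong \prod_i \Z_{m_i}$ and factoring the sum over the coordinates, which reduces everything to the cyclic case, where $\sum_{h=0}^{m-1} \omega_m^{hg}$ is a finite geometric series equal to $m$ if $m \mid g$ and to $0$ otherwise.
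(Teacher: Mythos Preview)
Your proof is correct. The paper does not actually supply its own proof of this lemma: it states the result among ``several important properties of the discrete Fourier transformation'' and refers the reader to \cite{SuzukiMW} for a proof. So there is nothing in the paper to compare against beyond noting that your argument is the standard one for character orthogonality: the $g=0$ case reduces to $|G^\vee|=|G|$, and the $g\neq 0$ case uses the existence of a separating character (which you correctly ground in the surjectivity of $G^\vee \to \langle g\rangle^\vee$ already asserted in the paper) together with the translation trick $S=(h_0\bullet g)\,S$. Your alternative route via the structure theorem and a factored geometric series is equally valid and is often how this is done in elementary treatments.
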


\begin{theorem}[Poisson summation formula]\label{thm:Poisson}
Let $f \colon G \to R$ be a function and
$\widehat{f} \colon G^\vee \to R$ its discrete Fourier transformation.
Then we have
\[
\frac{1}{|P|}\sum_{g \in P}{f(g)}
= \sum_{h \in P^{\perp}}{\widehat{f}(h)}.
\]
\end{theorem}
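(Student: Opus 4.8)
The plan is to evaluate the right-hand side directly by unfolding the definition of the discrete Fourier transformation and interchanging the two finite sums. Substituting $\widehat{f}(h) = |G|^{-1}\sum_{g \in G} f(g)(h \bullet g)$ and swapping the order of summation gives
\[
\sum_{h \in P^\perp}\widehat{f}(h) = \frac{1}{|G|}\sum_{g \in G} f(g)\sum_{h \in P^\perp}(h \bullet g),
\]
so the whole statement reduces to computing the inner character sum $S(g) := \sum_{h \in P^\perp}(h \bullet g)$ for each $g \in G$. This is an analogue of Lemma~\ref{lem:cyclicsum} with the ambient group $G^\vee$ replaced by the subgroup $P^\perp$.

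The key step is to identify $P^\perp$ with the character group $(G/P)^\vee$. Indeed, a homomorphism $h \in G^\vee$ lies in $P^\perp$ exactly when it is trivial on $P$, hence factors uniquely through the quotient $G/P$; conversely every character of $G/P$ pulls back to an element of $P^\perp$, and this correspondence is a group isomorphism. Under it, $h \bullet g$ depends only on the coset $\bar{g} := g + P \in G/P$ and equals $\bar{h} \bullet \bar{g}$ for the corresponding $\bar{h} \in (G/P)^\vee$. Therefore $S(g) = \sum_{\bar{h} \in (G/P)^\vee}(\bar{h} \bullet \bar{g})$, and Lemma~\ref{lem:cyclicsum} applied to the finite abelian group $G/P$ yields $S(g) = |G/P|$ when $\bar{g} = 0$, i.e. $g \in P$, and $S(g) = 0$ otherwise.

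Plugging this back in, only the terms with $g \in P$ survive, so the right-hand side equals $|G|^{-1}|G/P|\sum_{g \in P} f(g)$. Since $|P^\perp| = |G|/|P|$ (equivalently $|G/P| = |G|/|P|$), as noted in the excerpt, the scalar prefactor collapses to $1/|P|$, which is precisely the left-hand side, completing the proof. An alternative to the quotient argument, which I would mention in one line, is to invoke biduality $(P^\perp)^\perp = P$: then $h \mapsto (h \bullet g)$ is the trivial character of $P^\perp$ exactly when $g \in P$, and orthogonality (Lemma~\ref{lem:cyclicsum} read for the group $P^\perp$) kills $S(g)$ when $g \notin P$.

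The only real subtlety is this middle step: one must check that the pairing $\bullet$ descends to a well-defined pairing on $G/P$ and that $P^\perp \cong (G/P)^\vee$ is genuinely an isomorphism — injectivity is immediate, and surjectivity together with the index count both follow from $|P^\perp| = |G|/|P|$. Everything else is bookkeeping: expanding the definition, Fubini for finite sums over the commutative ring $R \supseteq \mathbb{C}$, and a single application of Lemma~\ref{lem:cyclicsum}.
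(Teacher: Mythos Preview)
Your argument is correct and is the standard proof of the finite Poisson summation formula: expand the Fourier coefficients, swap the finite sums, and evaluate the inner character sum via the identification $P^\perp \cong (G/P)^\vee$ together with Lemma~\ref{lem:cyclicsum}. Note, however, that the paper does not supply its own proof of this theorem; it merely states the result and points to \cite{SuzukiMW} for a proof, so there is nothing in the paper to compare your approach against beyond confirming that your proof is a valid and conventional one.
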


Walsh functions and Walsh coefficients are widely used to analyze the QMC error by digital nets.
We refer to \cite[Appendix A]{Dick2010dna} for general information on Walsh functions.
We denote the $\boldsymbol k$-th Walsh coefficient of $f$ by $\mathcal{F}(f)(\boldsymbol k)$, while they denote it by $\widehat f(\boldsymbol k)$ in \cite[Appendix A]{Dick2010dna}.

The relationship between Walsh coefficients and discrete Fourier coefficients
is in the following (for a proof, see \cite[Proposition~2.11]{SuzukiMW}).
Let $A = (a_{i,j}) \in \mathbb{Z}_b^{s \times n}$.
We define the function $\phi \colon \mathbb{Z}_b^{s \times n} \ni A = (a_{i,j}) \mapsto \phi(A) := (\sum_{j=1}^n{a_{i,j}\cdot b^{j-1}})_{i=1}^s \in \mathbb{N}_0^s$.
Note that each element of $\phi(A)$ is strictly less than $b^n$.

\begin{proposition}\label{prop:Walsh}
Let $A = (a_{i,j}) \in \mathbb{Z}_b^{s \times n}$
and assume that $f \colon [0,1)^s \to \mathbb{R}$ is integrable.
Then we have
\[
\overline{\mathcal{F}(f)(\phi(A))} = \widehat{f_n}(A).
\]
\end{proposition}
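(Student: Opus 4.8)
The plan is to unfold both sides to integrals over $[0,1)^s$ and reduce everything to a single pointwise identity, namely
\[
\mathrm{wal}_{\phi(A)}(\boldsymbol{x}) = A\bullet X \qquad\text{for every }\boldsymbol{x}\in I_n(X),
\]
where $\mathrm{wal}_{\boldsymbol{k}}$ is the ($s$-variate, base-$b$) Walsh function of index $\boldsymbol{k}$ and $A\bullet X=\omega_b^{\sum_{i,j}a_{i,j}x_{i,j}}$ is the pairing on $\mathbb{Z}_b^{s\times n}$ obtained as the product of the coordinate pairings $\mathbb{Z}_b\times\mathbb{Z}_b\to T$. Once this identity is in hand the proposition follows by matching two integral formulas.

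First I would rewrite the right-hand side. Since $f_n(X)=b^{sn}\int_{I_n(X)}f(\boldsymbol{x})\,d\boldsymbol{x}$ and the cubes $\{I_n(X)\}_{X\in\mathbb{Z}_b^{s\times n}}$ partition $[0,1)^s$, we get
\[
\widehat{f_n}(A)=b^{-sn}\sum_{X\in\mathbb{Z}_b^{s\times n}}f_n(X)\,(A\bullet X)=\sum_{X\in\mathbb{Z}_b^{s\times n}}(A\bullet X)\int_{I_n(X)}f(\boldsymbol{x})\,d\boldsymbol{x}=\int_{[0,1)^s}f(\boldsymbol{x})\,g_A(\boldsymbol{x})\,d\boldsymbol{x},
\]
where $g_A$ is the step function equal to the constant $A\bullet X$ on $I_n(X)$. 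For the left-hand side, recalling the normalization $\mathcal{F}(f)(\boldsymbol{k})=\int_{[0,1)^s}f(\boldsymbol{x})\,\overline{\mathrm{wal}_{\boldsymbol{k}}(\boldsymbol{x})}\,d\boldsymbol{x}$ of \cite[Appendix~A]{Dick2010dna} and using that $f$ is real-valued,
\[
\overline{\mathcal{F}(f)(\phi(A))}=\int_{[0,1)^s}f(\boldsymbol{x})\,\mathrm{wal}_{\phi(A)}(\boldsymbol{x})\,d\boldsymbol{x}.
\]
Hence it suffices to show $\mathrm{wal}_{\phi(A)}=g_A$ pointwise on $[0,1)^s$, i.e.\ the displayed identity.

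To prove the identity I would work coordinatewise, using $\mathrm{wal}_{\boldsymbol{k}}(\boldsymbol{x})=\prod_{i=1}^s\mathrm{wal}_{k_i}(x_i)$ and $A\bullet X=\prod_{i=1}^s\omega_b^{\sum_{j=1}^n a_{i,j}x_{i,j}}$. Fix $i$. The base-$b$ digits of $\phi(A)_i=\sum_{j=1}^n a_{i,j}b^{j-1}$ in positions $0,\dots,n-1$ are exactly $a_{i,1},\dots,a_{i,n}$, and all digits in positions $\geq n$ vanish since $\phi(A)_i<b^n$. For $\boldsymbol{x}\in I_n(X)$ the first $n$ base-$b$ digits of $x_i$ are $x_{i,1},\dots,x_{i,n}$; this is exactly the point where one must invoke the convention that a $b$-adic rational is expanded without a trailing string of digits all equal to $b-1$, which is what makes the half-open interval $I_n(X)=\prod_{i}[\sum_j x_{i,j}b^{-j},\sum_j x_{i,j}b^{-j}+b^{-n})$ the set of points whose $n$-digit truncation is $X$. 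Substituting these two digit expansions into the definition of the univariate Walsh function, the digits of $x_i$ in positions $>n$ are annihilated by the vanishing high digits of $\phi(A)_i$, leaving $\mathrm{wal}_{\phi(A)_i}(x_i)=\omega_b^{\sum_{j=1}^n a_{i,j}x_{i,j}}$. Taking the product over $i=1,\dots,s$ yields $\mathrm{wal}_{\phi(A)}(\boldsymbol{x})=\omega_b^{\sum_{i,j}a_{i,j}x_{i,j}}=A\bullet X$, and inserting this into the two integral formulas above finishes the argument.

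The only genuinely delicate step is the last one: correctly lining up the indexing conventions, i.e.\ which base-$b$ digit of the Walsh index $\phi(A)_i$ is paired with which base-$b$ digit of $x_i$, together with a consistent use of the one-sided expansion so that $I_n(X)$ really is the digit-truncation fibre over $X$. Everything else — the passage between $f_n$ and $f$, and the partition/Fubini manipulations — is routine bookkeeping.
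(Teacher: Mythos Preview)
Your argument is correct. The paper itself does not supply a proof of this proposition; it simply cites \cite[Proposition~2.11]{SuzukiMW}. Your proof is the natural direct verification and is essentially what one finds in that reference: rewrite both sides as integrals of $f$ against a step function on $[0,1)^s$, and check, via the digit conventions for $\phi$ and the definition of the $b$-adic Walsh functions, that the two step functions coincide on each elementary cube $I_n(X)$. The only point requiring care, as you noted, is matching the indexing so that digit $a_{i,j}$ of $\phi(A)_i$ pairs with digit $x_{i,j}$ of $x_i$ and invoking the one-sided expansion convention so that $I_n(X)$ is exactly the fibre over $X$; you handled both correctly.
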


\section{Mean Square Error with Respect to Digital Shifts}\label{sec:ave_var}
Let $G$ be a general finite abelian group,
$P \subset G$ a point set
and $F \colon G \to \mathbb{R}$ a real-valued function.
Then QMC integration by $P$ is an approximation $I_P(F) := {|P|}^{-1}\sum_{g \in P}{F(g)}$ of the actual average value $I(F) := {|G|}^{-1}\sum_{g \in G}{F(g)}$ of $F$ over $G$.

For $\sigma \in G$, we define the digitally shifted point set $P + \sigma$ 
by $P + \sigma = \{g + \sigma \mid g \in P \}$.
We consider the mean and the variance of the estimator $I_{P+\sigma}(F)$ for digitally shifted point sets of $P \subset G$.

First we consider the average $\mathbb{E}_{\sigma\in G}[I_{P+\sigma}(F)]$.
We have
\begin{align*}
\frac{1}{|G|}\sum_{\sigma \in G} I_{P + \sigma}(F)
&= \frac{1}{|G|}\sum_{\sigma \in G} \frac{1}{|P|} \sum_{g \in P}F(g + \sigma)
 = \frac{1}{|P|}\sum_{g \in P}\frac{1}{|G|}\sum_{\sigma \in G}F(g + \sigma)\\
&= \frac{1}{|P|}\sum_{g \in P} I(F) =  I(F),
\end{align*}
and thus we have the following,
showing that a randomized QMC integration using a digitally shifted point set $P + \sigma$
gives an unbiased estimator $I_{P+\sigma}(F)$ of $I(F)$.

\begin{proposition}\label{prop:ave}
For an arbitrary subset $P \subset G$, we have
\[
\mathbb{E}_{\sigma\in G}[I_{P+\sigma}(F)] = I(F).
\]
\end{proposition}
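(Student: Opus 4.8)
The plan is to exchange the order of the two averages and then exploit the translation invariance of the uniform measure on the finite group $G$. Expanding the definition, $I_{P+\sigma}(F) = |P|^{-1}\sum_{g\in P}F(g+\sigma)$, so that $\mathbb{E}_{\sigma\in G}[I_{P+\sigma}(F)] = |G|^{-1}\sum_{\sigma\in G}|P|^{-1}\sum_{g\in P}F(g+\sigma)$. Since both index sets $G$ and $P$ are finite, I would interchange the two sums, which reduces the assertion to the claim that for each fixed $g\in P$ one has $|G|^{-1}\sum_{\sigma\in G}F(g+\sigma) = I(F)$.

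The key step is the observation that, as $G$ is a group, the translation map $\sigma\mapsto g+\sigma$ is a bijection of $G$ onto itself for every fixed $g$. Hence the substitution $\tau = g+\sigma$ gives $\sum_{\sigma\in G}F(g+\sigma) = \sum_{\tau\in G}F(\tau) = |G|\,I(F)$, so the inner average equals $I(F)$ independently of $g$. Averaging over $g\in P$ then yields $|P|^{-1}\sum_{g\in P}I(F) = I(F)$, which is the claim.

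I do not expect any genuine obstacle here. The only point worth flagging is that the statement is made for an arbitrary \emph{subset} $P\subset G$ rather than a subgroup; this causes no trouble, since the argument uses no algebraic structure on $P$ whatsoever — only that $P$ is a nonempty finite set over which we form an unweighted average, together with the group structure of the ambient $G$ in the bijection step. (One should of course tacitly assume $P\neq\emptyset$ so that $I_{P+\sigma}(F)$ is defined, as is implicit throughout the paper.)
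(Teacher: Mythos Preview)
Your proof is correct and follows exactly the same route as the paper: expand the definition, swap the two finite sums, and use that $\sigma\mapsto g+\sigma$ is a bijection of $G$ so the inner average is $I(F)$ independently of $g$. Your remark that only the group structure of $G$ (not of $P$) is used is also precisely the point, matching the paper's claim for an arbitrary subset $P$.
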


From this, we have that the mean square QMC error equals the variance $\mathrm{Var}_{\sigma\in G}[I_{P+\sigma}(F)]$.
Hereafter we assume that $P \subset G$ is a subgroup of $G$.
\begin{lemma}\label{lem:I_P+sigma}
Let $P \subset G$ be a subgroup.
Then we have
\[
I_{P+\sigma}(F) = \sum_{h \in P^\perp} \goinnerprodsy{h}{\sigma}^{-1} \widehat{F}(h).
\]
\end{lemma}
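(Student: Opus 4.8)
The plan is to apply the Poisson summation formula (Theorem~\ref{thm:Poisson}) to a suitably chosen function that encodes the shifted average. Fix $\sigma \in G$ and define $F_\sigma \colon G \to \mathbb{R}$ by $F_\sigma(g) := F(g + \sigma)$. Then $I_{P+\sigma}(F) = |P|^{-1}\sum_{g \in P} F(g+\sigma) = |P|^{-1}\sum_{g \in P} F_\sigma(g)$, which is exactly the left-hand side of the Poisson summation formula applied to $F_\sigma$. Hence $I_{P+\sigma}(F) = \sum_{h \in P^\perp} \widehat{F_\sigma}(h)$, and the task reduces to computing $\widehat{F_\sigma}(h)$ in terms of $\widehat{F}(h)$.

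The key computation is the standard "shift rule" for the discrete Fourier transformation. By definition,
\[
\widehat{F_\sigma}(h) = \frac{1}{|G|}\sum_{g \in G} F(g+\sigma)\, \goinnerprodsy{h}{g}.
\]
Substituting $g' = g + \sigma$ (a bijection of $G$), this becomes $|G|^{-1}\sum_{g' \in G} F(g')\, \goinnerprodsy{h}{g' - \sigma}$. Since $h$ is a group homomorphism into $T$, we have $\goinnerprodsy{h}{g' - \sigma} = \goinnerprodsy{h}{g'} \cdot \goinnerprodsy{h}{-\sigma} = \goinnerprodsy{h}{g'} \cdot \goinnerprodsy{h}{\sigma}^{-1}$, so the factor $\goinnerprodsy{h}{\sigma}^{-1}$ pulls out of the sum, giving $\widehat{F_\sigma}(h) = \goinnerprodsy{h}{\sigma}^{-1} \widehat{F}(h)$. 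Plugging this into the Poisson identity yields the claimed formula.

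I anticipate no serious obstacle here; the only minor points to get right are that $P^\perp$ is the correct index set (which is legitimate because $P$ is assumed a subgroup, so Theorem~\ref{thm:Poisson} applies), that the change of variables $g \mapsto g + \sigma$ is a bijection of the finite abelian group $G$, and that the homomorphism property $h(g' - \sigma) = h(g')h(\sigma)^{-1}$ is used correctly with the pairing notation $\goinnerprodsy{h}{g} = h(g)$. Everything else is routine rearrangement of finite sums.
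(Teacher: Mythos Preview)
Your proof is correct and follows essentially the same approach as the paper: define the shifted function $F_\sigma$, compute the shift rule $\widehat{F_\sigma}(h) = \goinnerprodsy{h}{\sigma}^{-1}\widehat{F}(h)$, and apply the Poisson summation formula (Theorem~\ref{thm:Poisson}). The only cosmetic difference is that the paper factors out $\goinnerprodsy{h}{(-\sigma)}$ directly rather than writing the change of variables $g' = g+\sigma$, but this is the same computation.
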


\begin{proof}
Let $F_\sigma(g) := F(g+\sigma)$.
Then for $h \in G^\vee$, we can calculate $\widehat{F_\sigma}(h)$ as
\begin{align*}
\widehat{F_\sigma}(h)
&= \frac{1}{|G|}\sum_{g \in G} \goinnerprodsy{h}{g} F_\sigma(g)\\
&= \goinnerprodsy{h}{(-\sigma)} \frac{1}{|G|}\sum_{g \in G} F(g + \sigma) \goinnerprodsy{h}{(g+\sigma)}\\
&=  \goinnerprodsy{h}{\sigma}^{-1} \widehat{F}(h).
\end{align*}
Thus by Theorem~\ref{thm:Poisson} we have
\begin{align*}
I_{P+\sigma}(F)
&= \frac{1}{|P|}\sum_{g \in P}F_\sigma(g)\\
&= \sum_{h \in P^\perp}\widehat{F_\sigma}(h)\\
&= \sum_{h \in P^\perp} \goinnerprodsy{h}{\sigma}^{-1} \widehat{F}(h),
\end{align*}
which proves the result. \qed
\end{proof}

By Proposition~\ref{prop:ave} and Lemma~\ref{lem:I_P+sigma}, we have
\begin{align*}
\mathrm{Var}_{\sigma\in G}[I_{P+\sigma}(F)]
&:= \frac{1}{|G|}\sum_{\sigma \in G} (I_{P + \sigma}(F) - \mathbb{E}_{\sigma\in G}[I_{P+\sigma}(F)])^2\\
&= \frac{1}{|G|}\sum_{\sigma \in G} |I_{P + \sigma} (F)- I(F)|^2\\
&= \frac{1}{|G|} \sum_{\sigma \in G}\left\lvert\sum_{h \in P^\perp \smallsetminus \{0\}} \goinnerprodsy{h}{\sigma}^{-1} \widehat{F}(h)\right\rvert^2\\
&= \frac{1}{|G|} \sum_{\sigma \in G} \sum_{h \in P^\perp \smallsetminus \{0\}}\goinnerprodsy{h}{\sigma}^{-1} \widehat{F}(h) \overline{\sum_{h' \in P^\perp \smallsetminus \{0\}} \goinnerprodsy{h'}{\sigma}^{-1} \widehat{F}(h')}\\
&= \frac{1}{|G|} \sum_{h \in P^\perp \smallsetminus \{0\}} \sum_{h' \in P^\perp \smallsetminus \{0\}} \widehat{F}(h) \overline{\widehat{F}(h')} \sum_{\sigma \in G} \goinnerprodsy{(h'-h)}{\sigma}\\
&= \sum_{h \in P^\perp \smallsetminus \{0\}}\mathord{\left|\widehat{F}(h)\right|}^2,
\end{align*}
where the last equality follows from Lemma~\ref{lem:cyclicsum}.
Now we proved:

\begin{theorem}\label{thm:Var for G}
Let $P \subset G$ be a subgroup.
Then we have
\[
\mathrm{Var}_{\sigma\in G}[I_{P+\sigma}(F)]
= \sum_{h \in P^\perp \smallsetminus \{0\}}\mathord{\left|\widehat{F}(h)\right|}^2.
\]
\end{theorem}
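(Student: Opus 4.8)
The plan is to combine the unbiasedness established in Proposition~\ref{prop:ave} with the spectral expansion of Lemma~\ref{lem:I_P+sigma}, and then to collapse the resulting double sum using orthogonality of characters. First, since $\mathbb{E}_{\sigma\in G}[I_{P+\sigma}(F)] = I(F)$ by Proposition~\ref{prop:ave}, the mean square QMC error coincides with the variance
\[
\mathrm{Var}_{\sigma\in G}[I_{P+\sigma}(F)] = \frac{1}{|G|}\sum_{\sigma\in G}\bigl|I_{P+\sigma}(F) - I(F)\bigr|^2 .
\]
I would then isolate the zero mode in Lemma~\ref{lem:I_P+sigma}: the term indexed by $h=0\in P^\perp$ contributes $\goinnerprodsy{0}{\sigma}^{-1}\widehat{F}(0) = \widehat{F}(0) = I(F)$, since $\widehat{F}(0) = |G|^{-1}\sum_{g\in G}F(g) = I(F)$. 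Hence $I_{P+\sigma}(F) - I(F) = \sum_{h\in P^\perp\smallsetminus\{0\}} \goinnerprodsy{h}{\sigma}^{-1}\widehat{F}(h)$.

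Next I would expand the squared modulus into a double sum over $h,h'\in P^\perp\smallsetminus\{0\}$. Because every pairing value lies in $T$, we have $\overline{\goinnerprodsy{h'}{\sigma}^{-1}} = \goinnerprodsy{h'}{\sigma}$, and since $\bullet$ is a homomorphism in its first argument the product of the two phase factors is $\goinnerprodsy{h'-h}{\sigma}$. After interchanging the finite sums, the inner $\sigma$-sum factors out as $\sum_{\sigma\in G}\goinnerprodsy{h'-h}{\sigma}$, leaving
\[
\mathrm{Var}_{\sigma\in G}[I_{P+\sigma}(F)] = \frac{1}{|G|}\sum_{h,h'\in P^\perp\smallsetminus\{0\}} \widehat{F}(h)\,\overline{\widehat{F}(h')}\sum_{\sigma\in G}\goinnerprodsy{h'-h}{\sigma}.
\]

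The key step is to evaluate $\sum_{\sigma\in G}\goinnerprodsy{k}{\sigma}$ for $k = h'-h\in G^\vee$: this equals $|G|$ when $k = 0$ and $0$ otherwise. This is the ``dual'' form of Lemma~\ref{lem:cyclicsum}, obtained by applying that lemma to the group $G^\vee$ together with the canonical identification $(G^\vee)^\vee \cong G$. Consequently only the diagonal terms $h' = h$ survive, the factor $|G|$ cancels the prefactor $|G|^{-1}$, and what remains is $\sum_{h\in P^\perp\smallsetminus\{0\}}|\widehat{F}(h)|^2$, as claimed. I do not expect a genuine obstacle here, as this is a Parseval-type identity; the one point deserving care is to invoke character orthogonality in the correct form, namely as a sum over group elements $\sigma$ rather than over characters, which is precisely why the identification $G\cong (G^\vee)^\vee$ is needed, and to confirm that the $h=0$ contribution is exactly $I(F)$ so that it cancels cleanly.
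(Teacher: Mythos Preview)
Your proposal is correct and follows essentially the same route as the paper: invoke Proposition~\ref{prop:ave} and Lemma~\ref{lem:I_P+sigma}, strip off the $h=0$ term as $I(F)$, expand the squared modulus into a double sum, swap sums, and collapse via character orthogonality. The one place where you are actually more careful than the paper is in noting that the needed identity $\sum_{\sigma\in G}\goinnerprodsy{k}{\sigma}=|G|\cdot[k=0]$ is the \emph{dual} form of Lemma~\ref{lem:cyclicsum} (a sum over $G$ rather than over $G^\vee$), requiring the identification $G\cong(G^\vee)^\vee$; the paper simply cites Lemma~\ref{lem:cyclicsum} without comment.
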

In particular, we immediately obtain the next corollary for the most important case.
\begin{corollary}\label{cor:Var for b-adic}
Let $P \subset \mathbb{Z}_b^{s \times n}$ be a subgroup, i.e., a digital net over $\mathbb Z_b$,
and $f_n$ be the $n$-digit discretization of $f\colon [0,1)^s \to \mathbb R$.
Then we have
\[
\mathrm{Var}_{\sigma\in \mathbb{Z}_b^{s\times n}}[I_{P+\sigma}(f_n)] = \sum_{A \in P^\perp \smallsetminus \{0\}}\mathord{\left|\widehat{f_n}(A)\right|}^2.
\]
\end{corollary}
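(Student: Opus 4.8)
The plan is to derive the corollary as a direct specialization of Theorem~\ref{thm:Var for G}. First I would verify the hypotheses of that theorem in the present setting: $G := \mathbb{Z}_b^{s\times n}$ is a finite abelian group (a finite direct product of copies of $\mathbb{Z}_b$), every digital net $P \subset \mathbb{Z}_b^{s\times n}$ is by definition a subgroup of $G$, and the $n$-digit discretization $f_n \colon \mathbb{Z}_b^{s\times n}\to\mathbb{R}$ is a genuine real-valued function on $G$, each value $f_n(X)$ being the average of the Riemann-integrable $f$ over the subcube $I_n(X)$ of positive volume. Moreover, the randomized estimator appearing in the statement is precisely the quantity $I_{P+\sigma}(F)$ of Theorem~\ref{thm:Var for G} for the choice $F = f_n$. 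Hence the theorem applies verbatim.

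Invoking it then yields immediately
\[
\mathrm{Var}_{\sigma\in \mathbb{Z}_b^{s\times n}}[I_{P+\sigma}(f_n)]
= \sum_{h \in P^\perp \smallsetminus \{0\}} \mathord{\left|\widehat{f_n}(h)\right|}^2 ,
\]
with $P^\perp$ taken inside the character group $(\mathbb{Z}_b^{s\times n})^\vee$. The only remaining step is notational: rename the summation index $h$ as $A$ and recall the identification of $(\mathbb{Z}_b^{s\times n})^\vee$ with $\mathbb{Z}_b^{s\times n}$ fixed in Section~\ref{sec:Preliminaries}, so that $P^\perp$ is viewed as a subset of $\mathbb{Z}_b^{s\times n}$. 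This reproduces the asserted formula.

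There is no real obstacle: the corollary is a bookkeeping instance of the general theorem, which is why the excerpt calls it \emph{immediate}. If one later wishes to pass from discrete Fourier coefficients to Walsh coefficients --- as is needed for the Koksma--Hlawka bound (\ref{Var_KH}) --- one can additionally apply Proposition~\ref{prop:Walsh} to rewrite each summand as $\mathord{\left|\widehat{f_n}(A)\right|}^2 = \mathord{\left|\mathcal{F}(f)(\phi(A))\right|}^2$, the complex conjugation in that proposition being immaterial inside the modulus. That refinement, however, is not required for the corollary as stated.
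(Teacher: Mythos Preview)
Your proposal is correct and matches the paper's approach exactly: the corollary is obtained immediately by specializing Theorem~\ref{thm:Var for G} to $G=\mathbb{Z}_b^{s\times n}$ and $F=f_n$, then using the identification of $(\mathbb{Z}_b^{s\times n})^\vee$ with $\mathbb{Z}_b^{s\times n}$ to write the summation index as $A$. The paper itself supplies no further argument, simply stating that the result follows ``immediately,'' so your verification of the hypotheses is, if anything, more detailed than what appears there.
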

Our results obtained in this section
can be regarded as the discretized version of known results \cite{BD09,LL02}.

\section{WAFOM for Root Mean Square Error}\label{sec:wafom_rmse}
In the previous section, we obtain that the mean square QMC error is equal
to a certain sum of the squared discrete Fourier coefficients,
and thus we would like to bound the value $|\widehat{f_n}(A)|$.
By Proposition~\ref{prop:Walsh}, it is sufficient to bound the Walsh coefficients of $f$,
and several types of upper bounds on the Walsh coefficients are already known.
In order to introduce bounds on the Walsh coefficients
proved by Dick \cite{Dick2008wsc, Dick2009dwc, Dick2010dna},
we define the Dick weight.

\begin{definition}
Let $A = (a_{i,j}) \in \mathbb{Z}_b^{s \times n}$.
The Dick weight $\mu \colon  \mathbb{Z}_b^{s \times n} \to \mathbb{N}_0$
is defined as
\[
\mu(A):=\sum_{\substack{1\leq i\leq s \\ 1\leq j\leq n}} j\times \delta(a_{i,j}),
\]
where $\delta \colon \mathbb{Z}_b \to \{0,1\}$ is defined as
$\delta(a)=0$ for $a=0$ and 
$\delta(a)=1$ for $a\neq 0$.
\end{definition}

Dick \cite{Dick2008wsc} proved that
there is a constant $C_{b,s,n}$ depending only on $b$, $s$ and $n$ such that
for any $n$-smooth function $f \colon [0,1)^s \to \mathbb{R}$ and $\boldsymbol{k} \in \mathbb{N}^s$
it holds that $|\mathcal{F}(f)(\boldsymbol{k})| \leq C_{b,s,n} \|f\|_n \cdot b^{-\mu_n (\boldsymbol{k})} $ where
$\|f\|_n$ is a norm of $f$ for a Sobolev space and
$\mu_n (\boldsymbol{k})$ is the $n$-weight of $\boldsymbol{k}$, which are defined in
\cite[(14.6) and Theorem~14.23]{Dick2010dna}
(we do not define them here).
By adopting this bound by Dick for our setting, we have the following
(for a proof, see \cite{SuzukiMW}).
\begin{lemma}[Dick]
There exists a constant $C_{b,s,n}$ depending only on $b$, $s$ and $n$ such that
for any $n$-smooth function $f \colon [0,1)^s \to \mathbb{R}$
and any $A \in \mathbb{Z}_b^{s \times n}$
it holds that
\[
\left|\widehat{f_n}(A)\right| \leq C_{b,s,n} \|f\|_n \cdot b^{-\mu (A)}.
\] 
\end{lemma}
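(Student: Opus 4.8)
The plan is to obtain the bound on $|\widehat{f_n}(A)|$ by transporting Dick's bound on the Walsh coefficients of $f$ through Proposition~\ref{prop:Walsh}. By that proposition we have $\overline{\mathcal{F}(f)(\phi(A))} = \widehat{f_n}(A)$, hence $|\widehat{f_n}(A)| = |\mathcal{F}(f)(\phi(A))|$, and it suffices to bound the latter by $C_{b,s,n}\|f\|_n b^{-\mu(A)}$ with a constant depending only on $b$, $s$, $n$.

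First I would handle the generic case in which no row of $A$ vanishes, so that $\phi(A) \in \mathbb{N}^s$. Here Dick's bound quoted above applies directly with $\boldsymbol{k} = \phi(A)$, giving $|\mathcal{F}(f)(\phi(A))| \le C_{b,s,n}\|f\|_n b^{-\mu_n(\phi(A))}$, and it remains only to compare the two weights. The one step requiring an argument is the identity
\[
\mu_n(\phi(A)) = \mu(A) = \sum_{1 \le i \le s,\ 1 \le j \le n} j\,\delta(a_{i,j}),
\]
which follows by unwinding the definition of the $n$-weight in \cite[(14.6)]{Dick2010dna}: for an integer $k$ with $0 \le k < b^{n}$ and base-$b$ expansion $k = \sum_{j=1}^{n} a_{j}b^{j-1}$ one has $n$-weight equal to $\sum_{j:\,a_{j}\neq 0} j$, and $\mu_n$ of a tuple is the sum of the $n$-weights of its components; summing over $i$ reproduces the Dick weight of $A$. (This coincidence is also recorded in \cite{SuzukiMW}.) Combined with $b^{-\mu_n(\phi(A))} = b^{-\mu(A)}$, this settles the generic case.

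Next I would reduce the general case to the generic one. Suppose exactly the rows $i \notin I$ of $A$ vanish, for some $I \subseteq \{1,\dots,s\}$. Then $\phi(A)$ has a zero in coordinate $i$ precisely for $i \notin I$, and the Walsh coefficient $\mathcal{F}(f)(\phi(A))$ equals the Walsh coefficient, indexed by the sub-tuple $(\phi(A)_i)_{i \in I} \in \mathbb{N}^{|I|}$, of the $|I|$-dimensional function obtained from $f$ by integrating out the variables $x_i$ with $i \notin I$; this marginal is again $n$-smooth with norm at most $\|f\|_n$. Applying the generic case in dimension $|I|$ (the Dick weight of the corresponding $|I|\times n$ submatrix equals $\mu(A)$) gives the desired estimate, and replacing $C_{b,s,n}$ by the maximum of the relevant constants over all subsets $I$ absorbs the change of dimension. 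The remaining boundary case $A = 0$ is immediate: $\widehat{f_n}(0) = I(f_n) = I(f)$ and $\mu(0) = 0$, so the claim reduces to $|I(f)| \le C_{b,s,n}\|f\|_n$, which holds because the norm $\|\cdot\|_n$ dominates the mean of $f$.

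The main obstacle I expect is not analytic but definitional: pinning down, directly from \cite{Dick2010dna}, that Dick's $n$-weight $\mu_n$ agrees with the Dick weight $\mu$ on the image of $\phi$, and checking that $n$-smoothness and the norm $\|\cdot\|_n$ behave monotonically under passing to lower-dimensional marginals so that the quoted bound can be invoked with a single constant. Beyond that, the argument is just the combination of Proposition~\ref{prop:Walsh} with Dick's Walsh-coefficient bound, with no new estimate required.
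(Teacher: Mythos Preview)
Your approach is correct and is essentially the intended derivation: combine Proposition~\ref{prop:Walsh} with Dick's Walsh-coefficient bound and check $\mu_n(\phi(A))=\mu(A)$, which holds because every component of $\phi(A)$ is $<b^n$ so all nonzero digits lie among the bottom $n$ and are counted by $\mu_n$. Note, however, that the paper does not actually give a proof of this lemma; it simply states the result with the parenthetical ``for a proof, see \cite{SuzukiMW}'', so there is no in-paper argument to compare against---your sketch is effectively supplying what the paper outsources, including the reduction to lower-dimensional marginals needed to handle rows of $A$ that vanish (a detail the paper's statement of Dick's bound, phrased for $\boldsymbol{k}\in\mathbb{N}^s$, does not cover directly).
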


Another upper bound on Walsh coefficients, which is tighter than above,
has been shown by Yoshiki \cite{Yoshiki} for $b=2$ and is written in discrete Fourier notation.
\begin{lemma}[Yoshiki]\label{lem:YoshikiBound}
Let $f \colon [0,1]^s \to \mathbb{R}$ and define $N_i := |\{j = 1,\dots,n\mid a_{i,j}\neq 0 \}|$ and $\boldsymbol N := (N_i)_{1\leq i \leq s} \subset \N_0^s$ \/ for $A = (a_{i,j}) \in \mathbb Z_2^{s\times n}$.
If the $\boldsymbol N$-th mixed partial derivative $f^{(\boldsymbol N)} := (\partial/\partial x_1)^{N_1}\cdots(\partial/\partial x_s)^{N_s}f$  of $f$ exists and is continuous, then we have
\[
\left|\widehat{f_n}(A)\right| \leq \left\|f^{(\boldsymbol N)}\right\|_{\infty} \cdot 2^{-(\mu(A) + h(A))},
\]
where $h(A) := \sum_{i,j}\delta(a_{i,j})$ is the Hamming weight
and $\| \cdot \|_{\infty}$ the supremum norm.
\end{lemma}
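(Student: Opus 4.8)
I would start from Proposition~\ref{prop:Walsh}: with $\boldsymbol k=\phi(A)$ one has $|\widehat{f_n}(A)|=|\mathcal F(f)(\boldsymbol k)|$, so it suffices to bound the Walsh coefficient, which for $b=2$ factorises into one-digit (Rademacher) factors,
\[
\mathcal F(f)(\boldsymbol k)=\int_{[0,1]^s}f(\bsx)\prod_{i=1}^s\prod_{j:\,a_{i,j}\neq 0}(-1)^{\xi_j(x_i)}\,d\bsx ,
\]
where $\xi_j(x)$ denotes the $j$-th binary digit of $x$. Thus the $i$-th coordinate carries exactly $N_i$ sign factors, placed at the positions $j$ that are counted by $h(A)$ and weighted by $\mu(A)$, and the whole estimate will be built from a one-dimensional bound applied coordinate by coordinate.

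The core is the following one-dimensional estimate, which I would prove by induction on the number $\nu$ of sign factors: if $g\in C^{\nu}[0,1]$ and $a_1>a_2>\dots>a_\nu\ge 1$, then
\[
\left|\int_0^1 g(x)\prod_{\ell=1}^{\nu}(-1)^{\xi_{a_\ell}(x)}\,dx\right|\le\bigl\|g^{(\nu)}\bigr\|_\infty\,2^{-(a_1+\dots+a_\nu+\nu)} .
\]
For the inductive step put $v=a_\nu$, the smallest position, and partition $[0,1)$ into the $2^{v-1}$ dyadic intervals $K_m$ of length $2^{1-v}$, each one a full period of the coarsest factor $x\mapsto(-1)^{\xi_v(x)}$. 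Writing $K_m=K_m^{L}\sqcup(K_m^{L}+2^{-v})$ with $K_m^{L}$ the left half, that coarsest factor equals $+1$ on $K_m^{L}$ and $-1$ on $K_m^{L}+2^{-v}$, whereas every finer factor $(-1)^{\xi_{a_\ell}(x)}$ with $\ell<\nu$ is unchanged under the shift $x\mapsto x+2^{-v}$ (no carry occurs, since the $v$-th binary digit of every $x\in K_m^{L}$ is $0$); consequently
\[
\int_{K_m}g(x)\prod_{\ell=1}^{\nu}(-1)^{\xi_{a_\ell}(x)}\,dx=\int_{K_m^{L}}\bigl(g(x)-g(x+2^{-v})\bigr)\prod_{\ell<\nu}(-1)^{\xi_{a_\ell}(x)}\,dx .
\]
Rescaling $K_m^{L}$ affinely onto $[0,1)$ identifies the right-hand side with $2^{-v}$ times an integral of the same shape, now having the $\nu-1$ factors at positions $a_\ell-v$ and the function $x\mapsto g(x)-g(x+2^{-v})$, whose $(\nu-1)$-st derivative has sup-norm at most $2^{-v\nu}\bigl\|g^{(\nu)}\bigr\|_\infty$ by the mean value theorem applied to $g^{(\nu-1)}$ together with the chain rule. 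Feeding this into the inductive hypothesis on each of the $2^{v-1}$ pieces and summing, the exponents of $2$ combine to give exactly the asserted bound; the base case $\nu=0$ is the trivial inequality $\bigl|\int_0^1 g\bigr|\le\|g\|_\infty$.

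To conclude I would run this recursion successively in the $s$ variables. Since first-difference operators in distinct variables commute, and commute with integration over the remaining variables, the full expansion writes $\mathcal F(f)(\boldsymbol k)$ as a finite signed sum of integrals---over boxes that are products of dyadic intervals---of a mixed finite difference of $f$ of order $N_i$ in the $i$-th variable; because $f^{(\boldsymbol N)}$ exists and is continuous, each such difference equals an integral of $f^{(\boldsymbol N)}$ over a sub-box (iterated fundamental theorem of calculus), hence is at most $\bigl\|f^{(\boldsymbol N)}\bigr\|_\infty$ times that sub-box's volume, while the powers of $2$ accumulated along the recursion multiply to $\prod_{i=1}^s 2^{-(\mu_i(A)+N_i)}=2^{-(\mu(A)+h(A))}$, with $\mu_i(A):=\sum_{j:\,a_{i,j}\neq 0}j$ and using $\sum_i\mu_i(A)=\mu(A)$, $\sum_i N_i=h(A)$. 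I expect the only genuine work to lie in the one-dimensional inductive step: keeping the dyadic partition, the carry-free shift-invariance of the finer factors, and the powers of $2$ precisely aligned so that the constant comes out sharp---equivalently, in an integration-by-parts reading of the same computation, checking that every boundary term vanishes because the iterated antiderivatives are built to vanish at the endpoints of the relevant subintervals. Everything else---the reduction through Proposition~\ref{prop:Walsh}, the coordinatewise factorisation, and the control of the mixed finite difference by the continuous mixed partial---is routine.
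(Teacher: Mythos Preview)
The paper does not prove this lemma; it is attributed to Yoshiki's talk and stated without proof, so there is no in-paper argument to compare against. Your proof is correct.

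The one-dimensional induction is the heart of the matter and is set up correctly: peeling off the coarsest Rademacher factor $r_v$, the carry-free shift invariance of the finer factors on each left half $K_m^L$, the rescaling that sends positions $a_\ell\mapsto a_\ell-v$, and the bookkeeping of powers of $2$ (Jacobian $2^{-v}$, chain rule $2^{-v(\nu-1)}$, mean-value $2^{-v}$, and the count $2^{v-1}$ of half-periods) all combine to hit the inductive target exactly. For the extension to $s$ variables your sketch is right in spirit; a tidier way to make it precise than unfolding the full signed sum of boxes is to induct on $s$ via Fubini: write $\mathcal F(f)(\boldsymbol k)=\int_0^1 G(x_s)\,W_{k_s}(x_s)\,dx_s$ with $G$ the inner $(s{-}1)$-fold integral, apply the one-dimensional bound to get $\|G^{(N_s)}\|_\infty\,2^{-(\mu_s(A)+N_s)}$, differentiate under the integral (continuity of $f^{(\boldsymbol N)}$ suffices), and invoke the $(s{-}1)$-dimensional hypothesis on $\partial_{x_s}^{N_s}f(\cdot,x_s)$ uniformly in $x_s$. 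Your closing remark about an integration-by-parts reading is also on target: constructing the iterated antiderivative of the Walsh function with vanishing boundary values and sup-norm $2^{-(\mu+\nu)}$ is the dual route to the same inequality with the same constant, and is the form in which this bound is usually presented.
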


Hence, similar to \cite{MatsumotoSaitoMatoba} and \cite{SuzukiMW},
we define a kind of figure of merit.

\begin{definition}[Walsh figure of merit for root mean square error]
Let $s$, $n$ be positive integers
and $P \subset \mathbb{Z}_b^{s \times n}$ a subgroup.
We define Walsh figure of merit for root mean square error of $P$ by
\begin{align*}
\DmsWF{P} &:= \sqrt{\sum_{A \in P^\perp \smallsetminus \{0\}}{b^{-2\mu (A)}}},\\
\YmsWF{P} &:= \sqrt{\sum_{A \in P^\perp \smallsetminus \{0\}}{b^{-2(\mu (A) + h(A))}}}.
\end{align*}
\end{definition}

We have the main result.
\begin{theorem}[Koksma-Hlawka type inequalities for root mean square error]\label{thm:KH-RMSE}
For an arbitrary subgroup $P \subset \mathbb Z_b^{s\times n}$ we have
\[
\sqrt{\mathrm{Var}_{\sigma\in \mathbb{Z}_b^{s\times n}}[I_{P+\sigma}(f_n)]}
\leq C_{b,s,n} \|f\|_{n} \DmsWF{P}.
\]
Moreover, if $b = 2$ then
\[
\sqrt{\mathrm{Var}_{\sigma\in \mathbb{Z}_2^{s\times n}}[I_{P+\sigma}(f_n)]}
\leq \left(\max_{\substack{0 \leq \boldsymbol N \leq n \\ \boldsymbol N \neq 0}} \left\|f^{(\boldsymbol N)}\right\|_{\infty}\right) \YmsWF{P}
\]
holds where the condition for the maximum is denoted by multi-index, i.e., the maximum value is taken over $\boldsymbol N = (N_1,\dots,N_s)$ such that $0 \leq N_i \leq n$ for all $i$ and $N_i \neq 0$ \/ for some $i$.
\end{theorem}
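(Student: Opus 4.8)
The plan is to combine Corollary~\ref{cor:Var for b-adic}, which expresses the mean square error as a sum of squared discrete Fourier coefficients of $f_n$, with the pointwise bounds on $|\widehat{f_n}(A)|$ recorded in the Dick and Yoshiki lemmas. Since the two quality measures $\DmsWF{P}$ and $\YmsWF{P}$ are defined precisely as the square roots of the corresponding sums over $P^\perp \smallsetminus \{0\}$ of $b^{-2\mu(A)}$ and $2^{-2(\mu(A)+h(A))}$ respectively, both inequalities will fall out after squaring the pointwise estimates and summing termwise.

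For the general-$b$ inequality, I would start from
\[
\mathrm{Var}_{\sigma\in \mathbb{Z}_b^{s\times n}}[I_{P+\sigma}(f_n)] = \sum_{A \in P^\perp \smallsetminus \{0\}}\left|\widehat{f_n}(A)\right|^2,
\]
which is Corollary~\ref{cor:Var for b-adic}, assume $f$ is $n$-smooth, and insert Dick's bound $|\widehat{f_n}(A)| \leq C_{b,s,n}\|f\|_n b^{-\mu(A)}$ into each term. This yields
\[
\mathrm{Var}_{\sigma\in \mathbb{Z}_b^{s\times n}}[I_{P+\sigma}(f_n)] \leq C_{b,s,n}^2 \|f\|_n^2 \sum_{A \in P^\perp \smallsetminus \{0\}} b^{-2\mu(A)} = \left(C_{b,s,n}\|f\|_n \DmsWF{P}\right)^2,
\]
and taking square roots gives the first claim.

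For the second inequality, with $b=2$ and all mixed partials $f^{(\boldsymbol N)}$ for $0 \leq \boldsymbol N \leq n$ assumed continuous (hence bounded on the compact cube), I would again start from Corollary~\ref{cor:Var for b-adic} and apply Lemma~\ref{lem:YoshikiBound} to each term: for $A \in P^\perp \smallsetminus \{0\}$ with associated multi-index $\boldsymbol N = \boldsymbol N(A)$ we have $|\widehat{f_n}(A)| \leq \|f^{(\boldsymbol N)}\|_\infty \, 2^{-(\mu(A)+h(A))}$. The bookkeeping point to check is that every $\boldsymbol N(A)$ arising this way satisfies $0 \leq N_i \leq n$ for all $i$ (there are only $n$ columns) and $\boldsymbol N(A) \neq 0$ (since $A \neq 0$ forces some $a_{i,j}\neq 0$), so $\|f^{(\boldsymbol N(A))}\|_\infty$ is bounded by the finite maximum $M := \max_{0 \leq \boldsymbol N \leq n,\ \boldsymbol N \neq 0}\|f^{(\boldsymbol N)}\|_\infty$. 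Hence
\[
\mathrm{Var}_{\sigma\in \mathbb{Z}_2^{s\times n}}[I_{P+\sigma}(f_n)] \leq M^2 \sum_{A \in P^\perp \smallsetminus \{0\}} 2^{-2(\mu(A)+h(A))} = \left(M\, \YmsWF{P}\right)^2,
\]
and taking square roots finishes the proof.

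There is no serious obstacle here: the argument is a direct combination of the identity in Corollary~\ref{cor:Var for b-adic} with the known Walsh coefficient bounds (which in turn rest on Proposition~\ref{prop:Walsh}). The only points requiring care are the implicit smoothness hypotheses on $f$ needed to invoke each of the two pointwise bounds, and the verification in the $b=2$ case that the finitely many multi-indices $\boldsymbol N(A)$ all stay within the range $0 \leq \boldsymbol N \leq n$ with $\boldsymbol N \neq 0$, which is exactly what legitimizes replacing the $A$-dependent constant by the uniform $M$.
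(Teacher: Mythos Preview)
Your proposal is correct and follows essentially the same approach as the paper: start from Corollary~\ref{cor:Var for b-adic}, insert the respective pointwise bounds on $|\widehat{f_n}(A)|$ from the Dick and Yoshiki lemmas, replace the $A$-dependent constant by the uniform maximum over admissible $\boldsymbol N$, and take square roots. The paper likewise only writes out the $b=2$ case and makes the same bookkeeping observation that $A\neq 0$ forces $\boldsymbol N\neq 0$ while $N_i\leq n$ for all $i$.
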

\begin{proof}
Since the proofs of these inequalities are almost identical, we only show the latter.
Apply Lemma~\ref{lem:YoshikiBound} for each term of Corollary~\ref{cor:Var for b-adic}.
For the factor $\left\|f^{(\boldsymbol N)}\right\|_\infty$, note
  that $\boldsymbol N$ depends only on $A$,
  that $A$ runs non-zero elements of $P^\perp$, and
  that $N_i \leq n$ for all $i$.
Then we have
\[ \mathrm{Var}_{\sigma\in \mathbb{Z}_b^{s\times n}}[I_{P+\sigma}(f_n)] \leq \sum_{A \in P^\perp\smallsetminus\{0\}}
\left(\max_{\substack{0 \leq \boldsymbol N \leq n \\ \boldsymbol N \neq 0}} \left\|f^{(\boldsymbol N)}\right\|_{\infty}\right)^2
2^{-2(\mu(A)+h(A))} \]
and the result follows.
\qed
\end{proof}

\section{Inversion Formula for $\msWF{P}{\nu}$}

For $A = (a_{i,j})_{1 \leq i \leq s, 1 \leq j \leq n} \in \mathbb{Z}_b^{s \times n}$,
we consider a general weight $\nu \colon \mathbb{Z}_b^{s \times n} \to \mathbb{R}$ given by
\[
\nu(A) = \sum_{\substack{1\leq i\leq s \\ 1\leq j\leq n}}\nu_{i,j}\delta(a_{i,j}),
\]
where $\nu_{i,j} \in \mathbb{R}$ for $1 \leq i \leq s$, $1\leq j \leq n$.
In this section, we give a practically computable formula for
\[
\msWF{P}{\nu} := \sqrt{\sum_{A \in P^\perp \smallsetminus \{0\}}{b^{-2\nu(A)}}}.
\]
Note that the Dick weight $\mu$ is given by $\nu_{i,j} = j$ and
the Hamming weight $h$ is given by $\nu_{i,j} = 1$.
The key of the formula \cite[(4.2)]{MatsumotoSaitoMatoba} for WAFOM is the discrete Fourier transformation.
In order to obtain a formula for $\msWF{P}{\nu}$,
we use a MacWilliams-type identity \cite{SuzukiMW},
which is also based on the discrete Fourier transformation.

Let $X := \{x_{i,j}(h)\}$ be a set of indeterminates
for $1\leq i \leq s$, $1\leq j \leq n$, and $h \in \mathbb{Z}_b$.
The complete weight enumerator polynomial of $P^\perp$, 
in a standard sense \cite[Chapter 5]{MacWilliams1977tec},
is defined by
\[
GW_{P^\perp}(X)
:=\sum_{A \in P^\perp} \prod_{\substack{1\leq i\leq s \\ 1\leq j\leq n}}x_{i,j}(a_{i,j}).
\]
Similarly, the complete weight enumerator polynomial of $P$ is defined by
\[
GW^*_{P}(X^*)
:=\sum_{B \in P} \prod_{\substack{1\leq i\leq s \\ 1\leq j\leq n}} x_{i,j}^*(b_{i,j}),
\]
where $B = (b_{i,j})_{1 \leq i \leq s, 1 \leq j \leq n}$ and
$X^* := \{x_{i,j}^*(g)\}$ is 
a set of indeterminates for $1\leq i \leq s$, $1\leq j \leq n$, and $g \in \mathbb{Z}_b$.
We note that if we substitute $GW_{P^\perp}(X)$ by
\begin{equation}\label{eq:MWsbst}
x_{i,j}(0) \leftarrow 1, \quad x_{i,j}(h) \leftarrow b^{-2\nu_{i,j}} \quad (h \neq 0),
\end{equation}
we have
\[
GW_{P^\perp}(\text{the above substitution})
= \msWF{P}{\nu}^2 + 1.
\]

By the MacWilliams-type identity for $GW$ \cite[Proposition~4.1]{SuzukiMW}, we have 
\begin{equation}\label{eq:MW}
GW_{P^\perp}(x_{i,j}(h))=
\frac{1}{|P|}
GW_{P}^*(\mbox{substituted}),
\end{equation}
where in the right hand side every $x_{i,j}^*(g)$ is 
substituted by
\[
x_{i,j}^*(g) \leftarrow \sum_{h \in \mathbb{Z}_b} \goinnerprodsy hg x_{i,j}(h).
\]

By substituting \eqref{eq:MWsbst} into \eqref{eq:MW}, we have the following result.
Since the result follows in the same way as in \cite[Corollary~4.2]{SuzukiMW},
we omit the proof.

\begin{theorem}\label{thm:WF2formula}
Let $P \subset \mathbb{Z}_b^{s \times n}$ be a subgroup.
Then we have
\[
\msWF{P}{\nu}
= \sqrt{-1 + \frac{1}{|P|} \sum_{B \in P}{\prod_{\substack{1\leq i\leq s \\ 1\leq j\leq n}}{ (1 + \eta(b_{i,j})b^{-2\nu_{i,j}}})  }},
\]
where $\eta(b_{i,j}) = b-1$ if $b_{i,j}=0$ and $\eta(b_{i,j}) = -1$ if $b_{i,j} \neq 0$.
\end{theorem}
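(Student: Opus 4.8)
The plan is to follow the template already used for WAFOM in \cite[Corollary~4.2]{SuzukiMW}, i.e.\ to start from the identity
\[
GW_{P^\perp}(\text{substitution \eqref{eq:MWsbst}}) = \msWF{P}{\nu}^2 + 1,
\]
which is immediate from the definition of $GW_{P^\perp}$: plugging in $x_{i,j}(0)\leftarrow 1$ and $x_{i,j}(h)\leftarrow b^{-2\nu_{i,j}}$ for $h\neq 0$ turns each summand $\prod_{i,j}x_{i,j}(a_{i,j})$ into $\prod_{i,j}b^{-2\nu_{i,j}\delta(a_{i,j})} = b^{-2\nu(A)}$, and separating off the term $A=0$ (which contributes $1$) gives exactly $\msWF{P}{\nu}^2+1$. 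So the left-hand side of the claimed formula, squared and with $1$ added, equals the left-hand side of the MacWilliams-type identity \eqref{eq:MW} after substitution \eqref{eq:MWsbst}.

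Next I would evaluate the right-hand side of \eqref{eq:MW} under the same substitution. The substitution rule for the dual variables reads $x_{i,j}^*(g)\leftarrow \sum_{h\in\mathbb{Z}_b}(h\bullet g)\,x_{i,j}(h)$; combined with \eqref{eq:MWsbst} this becomes
\[
x_{i,j}^*(g)\leftarrow (0\bullet g)\cdot 1 + \sum_{h\neq 0}(h\bullet g)\,b^{-2\nu_{i,j}}
= 1 + b^{-2\nu_{i,j}}\!\!\sum_{h\in\mathbb{Z}_b\smallsetminus\{0\}}\!\!(h\bullet g).
\]
Here one uses $0\bullet g = 1$ and, by Lemma~\ref{lem:cyclicsum} applied to the cyclic group $\mathbb{Z}_b$, that $\sum_{h\in\mathbb{Z}_b}(h\bullet g)$ equals $b$ if $g=0$ and $0$ otherwise; hence $\sum_{h\neq 0}(h\bullet g)$ equals $b-1$ if $g=0$ and $-1$ if $g\neq 0$. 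Thus the substituted value of $x_{i,j}^*(g)$ is precisely $1 + \eta(g)\,b^{-2\nu_{i,j}}$ with $\eta$ as in the statement. Substituting this into $GW_P^*(X^*) = \sum_{B\in P}\prod_{i,j}x_{i,j}^*(b_{i,j})$ and dividing by $|P|$ yields
\[
\msWF{P}{\nu}^2 + 1 = \frac{1}{|P|}\sum_{B\in P}\prod_{\substack{1\leq i\leq s\\ 1\leq j\leq n}}\bigl(1 + \eta(b_{i,j})\,b^{-2\nu_{i,j}}\bigr),
\]
and solving for $\msWF{P}{\nu}$ gives the theorem.

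I do not expect any genuine obstacle here: the argument is a direct specialization of the cited MacWilliams-type identity, and every ingredient (the weight-enumerator bookkeeping, the evaluation of the character sum over $\mathbb{Z}_b\smallsetminus\{0\}$, and the MacWilliams identity itself) is already available in the excerpt. The only point requiring a little care is making sure the substitution \eqref{eq:MWsbst} is applied consistently on both sides of \eqref{eq:MW} and that the character sum $\sum_{h\neq 0}(h\bullet g)$ is split off correctly from $\sum_{h\in\mathbb{Z}_b}(h\bullet g)$ — which is exactly the step where the $b-1$ versus $-1$ dichotomy in $\eta$ originates. Since this is entirely parallel to \cite[Corollary~4.2]{SuzukiMW}, the paper's choice to omit the detailed computation is justified.
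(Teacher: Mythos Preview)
Your proposal is correct and follows exactly the route the paper indicates: substitute \eqref{eq:MWsbst} into both sides of the MacWilliams-type identity \eqref{eq:MW}, evaluate the resulting character sum $\sum_{h\neq 0}(h\bullet g)$ via Lemma~\ref{lem:cyclicsum} to obtain the $\eta$-dichotomy, and solve for $\msWF{P}{\nu}$. The paper omits precisely these details with a pointer to \cite[Corollary~4.2]{SuzukiMW}, so you have simply filled in the computation it leaves implicit.
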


In particular, we can compute $\DmsWF{P}$ and $\YmsWF{P}$ as follows.
\begin{corollary}\label{cor:WF2formula}
Let $P \subset \mathbb{Z}_b^{s \times n}$ be a subgroup.
Then we have
\begin{align*}
\DmsWF{P}
&= \sqrt{-1 + \frac{1}{|P|} \sum_{B \in P}{\prod_{\substack{1\leq i\leq s \\ 1\leq j\leq n}}{ (1 + \eta(b_{i,j})b^{-2j}}) }},\\
\YmsWF{P}
&=\sqrt{ -1 + \frac{1}{|P|} \sum_{B \in P}{\prod_{\substack{1\leq i\leq s \\ 1\leq j\leq n}}{ (1 + \eta(b_{i,j})b^{-2(j+1)}}) }},
\end{align*}
where $\eta(b_{i,j}) = b-1$ if $b_{i,j}=0$ and $\eta(b_{i,j}) = -1$ if $b_{i,j} \neq 0$.
\end{corollary}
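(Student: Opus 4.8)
The plan is to obtain both formulas directly from Theorem~\ref{thm:WF2formula} by recognizing $\DmsWF{P}$ and $\YmsWF{P}$ as the two special cases of $\msWF{P}{\nu}$ singled out in the text.

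First I would note that the Dick weight already has the template form $\nu(A) = \sum_{1\leq i\leq s,\,1\leq j\leq n}\nu_{i,j}\delta(a_{i,j})$ with $\nu_{i,j} = j$; this is immediate from the definition $\mu(A) = \sum_{i,j} j\,\delta(a_{i,j})$. Hence $\DmsWF{P} = \msWF{P}{\nu}$ for $\nu_{i,j}=j$, so that $b^{-2\nu_{i,j}} = b^{-2j}$, and substituting $\nu_{i,j}=j$ into the formula of Theorem~\ref{thm:WF2formula} gives exactly the claimed expression for $\DmsWF{P}$, with $\eta$ as in that theorem.

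Next, for the Yoshiki-type weight appearing in Lemma~\ref{lem:YoshikiBound}, the one identity to check is the additivity
\[
\mu(A) + h(A) = \sum_{i,j} j\,\delta(a_{i,j}) + \sum_{i,j}\delta(a_{i,j}) = \sum_{i,j}(j+1)\,\delta(a_{i,j}),
\]
which shows that $\mu + h$ is again of the template form, now with $\nu_{i,j} = j+1$. Therefore $\YmsWF{P} = \msWF{P}{\nu}$ for this $\nu$, with $b^{-2\nu_{i,j}} = b^{-2(j+1)}$, and Theorem~\ref{thm:WF2formula} applied with $\nu_{i,j} = j+1$ yields the asserted formula for $\YmsWF{P}$.

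I expect no real obstacle here: the whole argument is bookkeeping of the weight vector, the only nontrivial (and still one-line) point being the additivity $\mu(A)+h(A) = \sum_{i,j}(j+1)\delta(a_{i,j})$, which holds because both $\mu$ and $h$ are linear combinations of the indicators $\delta(a_{i,j})$ with coefficients $j$ and $1$ respectively. Once these two identifications are made, the corollary follows by direct substitution into Theorem~\ref{thm:WF2formula}, and the description of $\eta(b_{i,j})$ carries over verbatim; accordingly one may, as the authors do for the theorem, simply omit the detailed computation.
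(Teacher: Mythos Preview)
Your proposal is correct and matches the paper's approach: the corollary is stated without proof, as it follows immediately from Theorem~\ref{thm:WF2formula} by the identifications $\nu_{i,j}=j$ for $\mu$ and $\nu_{i,j}=j+1$ for $\mu+h$, which the paper has already noted just before the theorem. Your one-line check that $\mu(A)+h(A)=\sum_{i,j}(j+1)\delta(a_{i,j})$ is exactly the bookkeeping needed, and nothing more is required.
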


Though computing WAFOM by definition needs iterating through $P^\perp$,
Theorem~\ref{thm:WF2formula} and Corollary~\ref{cor:WF2formula} gives it by iterating over $P$.
For QMC, the size $|P|$ can not exceed a reasonable number of computer operations opposed to huge $|P^\perp|$,
and thus Theorem~\ref{thm:WF2formula} and Corollary~\ref{cor:WF2formula}
is useful in many cases.

\section{Numerical Experiments}
To show that $\mathcal W$ works as a useful bound on root mean square errors we conduct two types of experiments.
The first one is to generate many point sets at random, and to observe the distribution of the criterion $\mathcal W$ and variance $\mathcal E$.
The other one is to search for low-$\mathcal W$ point sets and compare with digital nets consisting of the first terms of a known low-discrepancy sequence.

In this section we consider only $b = 2$ case, so $G := \mathbb Z_2^{s\times n}$ is a vector space over $\mathbb Z_2$ and a digital net $P \subset G$ is a linear subspace.
The dimension of $P$ over $\mathbb Z_2$ is denoted by $m$, i.e., $|P| = 2^m$.
We set $s = 4, 12$ and use the following eight test functions for $\boldsymbol x = (x_i)_{1 \leq i \leq s}$:
\begin{description}
\item[Polynomial] $f_0(\boldsymbol x) = (\sum_i x_i)^6$,
\item[Exponential] $f_j(\boldsymbol x) = \exp(a\sum_i x_i)$ ($a = 2/3$ for $j = 1$ and $a = 3/2$ for $j = 2$),
\item[Oscillatory] $f_3(\boldsymbol x) = \cos(\sum_i x_i)$,
\item[Gaussian] $f_4(\boldsymbol x) = \exp(\sum_i x_i^2)$,
\item[Product peak] $f_5(\boldsymbol x) = \prod_i (x_i^2+1)^{-1}$,
\item[Continuous] $f_6(\boldsymbol x) = \prod_i T(x_i)$ where $T(x) = \min_{i \in \mathbb Z}|3x-2i|$,
\item[Discontinuous] $f_7(\boldsymbol x) = \prod_i C(x_i)$ where $C(x) = (-1)^{\lfloor3x\rfloor}$.
\end{description}

Assuming that the discretization error is negligible, we have that $I_{\psi(P+\sigma)}(f)$ is a practically unbiased estimator of $I(f)$.
Thus we may consider that if the standard deviation $\mathcal E(f; P) := \sqrt{\mathrm{Var}_{\sigma \in G}[I_{\psi(P+\sigma)}(f)]}$ of the quasi-Monte Carlo integral is small then the root mean square error $\sqrt{\mathbb E_{\sigma \in G}[(I_{\psi(P+\sigma)}(f) - I(f))^2]}$ is as small as $\mathcal E(f; P)$.
From the same assumption we also have that $\mathcal E(f; P)$ is well approximated by $\sqrt{\mathrm{Var}_{\sigma \in G}[I_{P+\sigma}(f_n)]}$, on which we have a bound in Theorem~\ref{thm:KH-RMSE}.

In this section we implicitly use the weight $\mu+h$ so $\mathcal W(P)$ denotes $\mathcal W(P;\mu+h)$.
The aim of the experiments is to establish that if $\mathcal W(P)$ is small then so is $\mathcal E(f; P)$.
For this we compute $\mathcal W$ by the inversion formula in Corollary~\ref{cor:WF2formula} and approximate $\mathcal E(f; P) = \sqrt{\mathrm{Var}_{\sigma \in G}[I_{\psi(P+\sigma)}(f)]}$ by sampling $2^{10}$ digital shifts $\sigma \in G$ uniformly, randomly and independent of each other.

We observe both the criterion $\mathcal W$ and the variance $\mathcal E$ in binary logarithm, which is denoted by $\lg$.

\subsection{The Distribution of $(\mathcal W, \mathcal E)$}
In this experiment we set $m = 10, 12$ and $n = 32$, generate point sets $P$, compute $\mathcal W(P)$, approximate $\mathcal E(f;P)$ for test functions $f$ and observe $(\mathcal W, \mathcal E)$.

We generate $1000$ point sets $P$ by random and uniform choice of their basis.
The event that we generate point set with linearly {\em dependent} ``basis'' has so low a probability that we do not check the linear independence in the implementation.

For each $(s, m, f)$ we calculate the correlation coefficient between $\mathcal W(P)$ and $\mathcal E(f; P)$ log-scaled, obtaining the result as in Table~\ref{table:w2e2-cc}.
For typical distributions of $(\mathcal W(P),\mathcal E(f;P))$ for smooth, continuous nondifferentiable and discontinuous functions we refer the readers to Figures~\ref{figure:04-10-cs}--\ref{figure:04-12-dc}.
We observe that there are very high correlations (correlation coefficient is larger than $0.85$) between $\mathcal W(P)$ and $\mathcal E(f; P)$ if $f$ is smooth.
Though $f_6$ is a nondifferentiable function we have significant correlation coefficients around $0.35$.
However, for the discontinuous function $f_7$ it seems we can do almost nothing for the root mean square error through $\mathcal W(P)$.

\begin{table}
\caption{The correlation coefficient between $\lg\mathcal W(P)$ and $\lg\mathcal E(f; P)$}\label{table:w2e2-cc}
\centering
\begin{tabular}{l|rrrr}\hline
$s$ & $4$ & $4$ & $12$ & $12$ \\
$m$ & $10$ & $12$ & $10$ & $12$ \\
 \hline$f_0$ & $\phantom{-}.9861$ & $\phantom{-}.9920$ & $\phantom{-}.9821$ & $\phantom{-}.9776$ \\
$f_1$ & $\phantom{-}.9907$ & $\phantom{-}.9901$ & $\phantom{-}.9842$ & $\phantom{-}.9866$ \\
$f_2$ & $\phantom{-}.9897$ & $\phantom{-}.9887$ & $\phantom{-}.9821$ & $\phantom{-}.9851$ \\
$f_3$ & $\phantom{-}.9794$ & $\phantom{-}.9818$ & $\phantom{-}.8900$ & $\phantom{-}.8916$ \\
$f_4$ & $\phantom{-}.9723$ & $\phantom{-}.9599$ & $\phantom{-}.9975$ & $\phantom{-}.9951$ \\
$f_5$ & $\phantom{-}.9421$ & $\phantom{-}.9144$ & $\phantom{-}.9912$ & $\phantom{-}.9839$ \\
$f_6$ & $\phantom{-}.3976$ & $\phantom{-}.3218$ & $\phantom{-}.4077$ & $\phantom{-}.3258$ \\
$f_7$ & $\phantom{-}.0220$ & $\phantom{-}.0102$ & $\phantom{-}.0208$ & $\phantom{-}.0171$ \\

\end{tabular}
\end{table}
\begin{figure}\begin{minipage}{0.48\hsize}\begin{center}
	\includegraphics[scale=0.5]{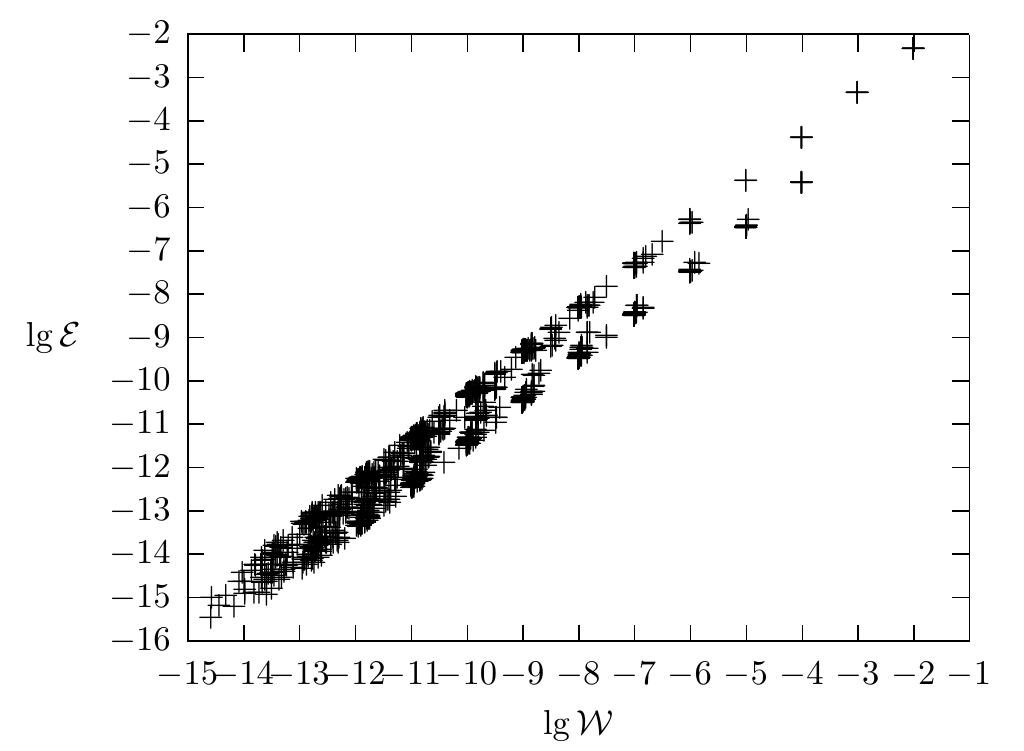}
	\caption{$s = 4$ and $m = 10$. The integrand is the oscillatory function $f_3(\boldsymbol x) = \cos(\sum_i x_i)$.}
	\label{figure:04-10-cs}\end{center}
\end{minipage}\hfil\begin{minipage}{0.48\hsize}\begin{center}
	\includegraphics[scale=0.5]{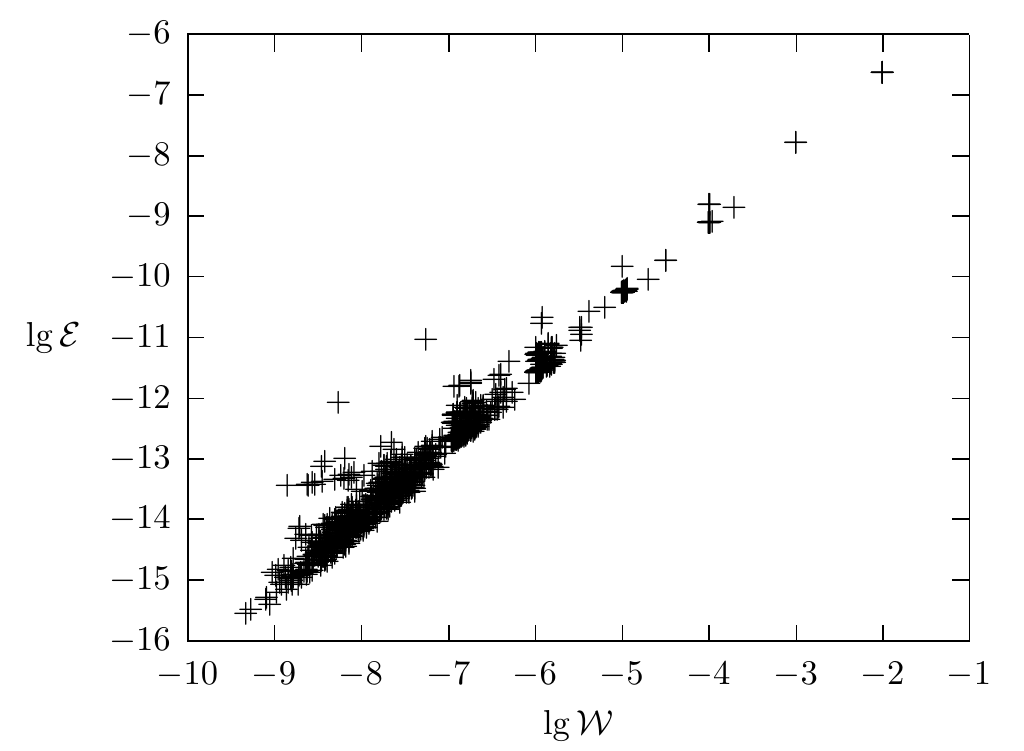}
	\caption{$s = 12$ and $m = 12$. The integrand is the product peak function $f_5(\boldsymbol x) = \prod_i (x_i^2+1)^{-1}$.}
	\label{figure:12-12-pp}\end{center}
\end{minipage}\end{figure}
\begin{figure}\begin{minipage}{0.48\hsize}\centering
	\includegraphics[scale=0.5]{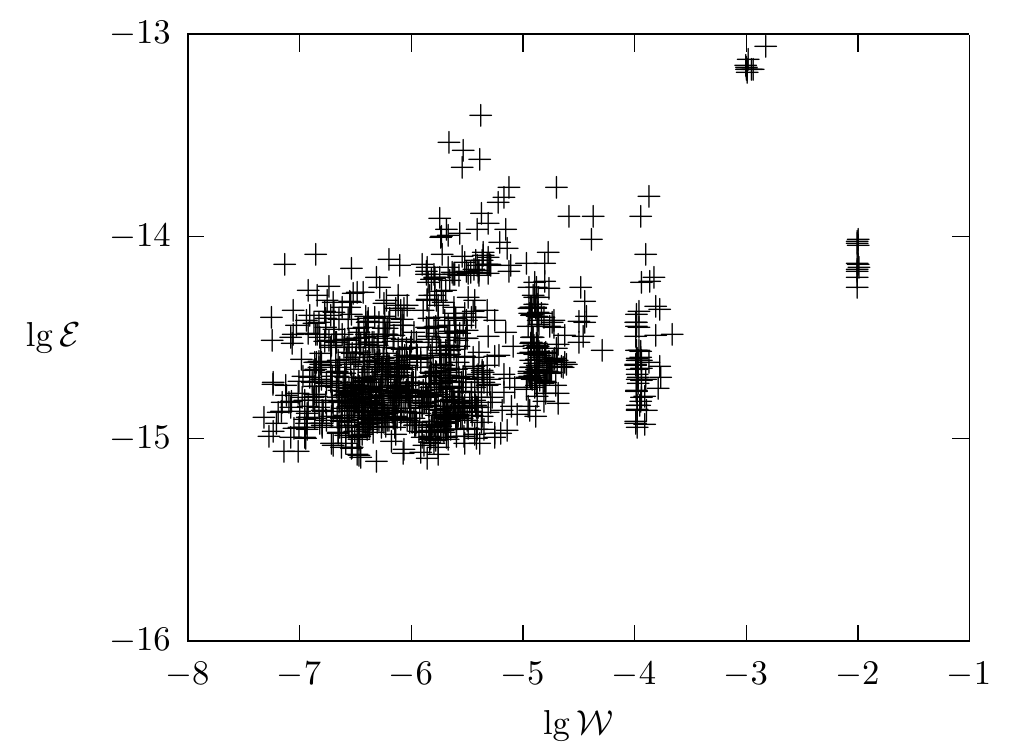}
	\caption{$s = 12$ and $m = 10$. The integrand is the continuous nondifferentiable function $f_6(\boldsymbol x) = \prod_i T(x_i)$ where $T(x) = \min_{j \in \mathbb Z}|3x-2j|$.}
	\label{figure:12-10-cn}
\end{minipage}\hfil\begin{minipage}{0.48\hsize}\centering
\includegraphics[scale=0.5]{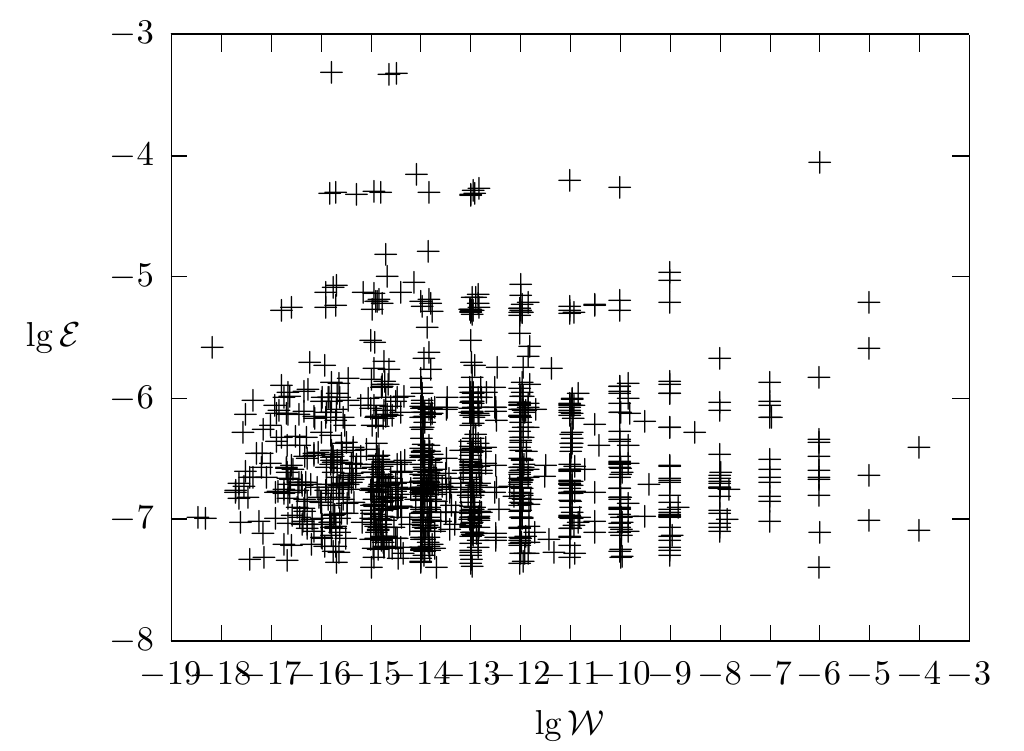}
\caption{$s = 4$ and $m = 12$. The integrand is the discontinuous function $f_7(\boldsymbol x) = \prod_i C(x_i)$ where $C(x) = (-1)^{\lfloor3x\rfloor}$.}
\label{figure:04-12-dc}
\end{minipage}\end{figure}

\subsection{Comparison to Known Low-discrepancy Sequence}
In this experiment we set $n = 30$.
For $8 \leq m < 16$, let $P$ be a low-$\mathcal W$ point set and $P_{\text{NX}}$ the digital net consisting of the first $2^m$ points from $s$-dimensional Niederreiter-Xing sequence from \cite{DNMPS}.

The search algorithm for low-$\mathcal W$ point sets is based on simulated annealing but not described here.
Note that point sets we obtain by this method are not extensible in $m$,
i.e., one cannot increase the size of $P$ while retaining the existing points.
For a search for extensible point sets which are good in $\mathcal W$-like (but different in weight and exponent) criterion, see \cite{HOX}.

Varying $m$, we observe $\lg\mathcal W(P_{\text{NX}})$, $\lg\mathcal W(P)$ and for each test function $\lg\mathcal E(f;P_{\text{NX}})$, $\lg\mathcal E(f;P)$ in Table~\ref{table:nxlw-nt}.
As shown in Figures~\ref{figure:wafom-04} and \ref{figure:wafom-12}, the $\mathcal W$-value of point sets $P$ optimized in $\mathcal W$ by our method is far better than that of $P_{\text{NX}}$ however this is not surprising.
The $\mathcal W$-values of $P_{\text{NX}}$ has plateaus and sudden drops.
In Figures~\ref{figure:04-pp} and \ref{figure:12-dc} are the root mean square errors for two test functions;
we clearly observe higher order convergence in the former for the smooth function $f_5$ and
for the discontinuous function $f_7$ in the latter only \textit{lower} order convergence can be achieved by \textit{both} methods.

\begin{table}\centering
\caption{Comparison
between Niederreiter-Xing sequence ($P_{\text{NX}}$) and low-$\mathcal W$ point sets ($P$) in $\lg\mathcal W$ and $\lg\mathcal E$.}
\label{table:nxlw-nt}
\begin{tabular}{lr|rrrrrrrr}\hline
& $s$ & $m=8$ & $9$ & $10$ & $11$ & $12$ & $13$ & $14$ & $15$ \\ \hline
$\lg\mathcal W(P_{\text{NX}})$ & 4 & $-10.31$ & $-12.40$ & $-12.90$ & $-12.98$ & $-15.74$ & $-15.77$ & $-15.77$ & $-23.20$\\
$\lg\mathcal W(P)$ & 4 & $-12.59$ & $-14.39$ & $-16.39$ & $-17.91$ & $-19.50$ & $-21.82$ & $-23.67$ & $-26.00$\\ \hline
$\lg\mathcal E(f_{0};P_{\text{NX}})$ & 4 & $-0.19$ & $-2.17$ & $-3.22$ & $-3.45$ & $-5.93$ & $-5.98$ & $-5.94$ & $-12.75$\\
$\lg\mathcal E(f_{0};P)$ & 4 & $-2.14$ & $-3.99$ & $-6.03$ & $-7.51$ & $-9.35$ & $-11.95$ & $-13.63$ & $-16.40$\\ \hline
$\lg\mathcal E(f_{1};P_{\text{NX}})$ & 4 & $-9.81$ & $-11.99$ & $-12.07$ & $-12.12$ & $-15.01$ & $-15.00$ & $-14.98$ & $-23.26$\\
$\lg\mathcal E(f_{1};P)$ & 4 & $-12.74$ & $-14.72$ & $-16.54$ & $-18.62$ & $-20.58$ & $-23.09$ & $-24.82$ & $-27.47$\\ \hline
$\lg\mathcal E(f_{2};P_{\text{NX}})$ & 4 & $-3.76$ & $-5.60$ & $-6.67$ & $-6.93$ & $-9.42$ & $-9.50$ & $-9.46$ & $-15.92$\\
$\lg\mathcal E(f_{2};P)$ & 4 & $-5.25$ & $-6.87$ & $-8.82$ & $-10.20$ & $-11.55$ & $-13.51$ & $-15.34$ & $-17.45$\\ \hline
$\lg\mathcal E(f_{3};P_{\text{NX}})$ & 4 & $-10.93$ & $-13.62$ & $-14.14$ & $-14.47$ & $-16.84$ & $-16.84$ & $-16.86$ & $-24.03$\\
$\lg\mathcal E(f_{3};P)$ & 4 & $-13.13$ & $-14.91$ & $-17.00$ & $-18.57$ & $-20.17$ & $-22.40$ & $-24.28$ & $-27.04$\\ \hline
$\lg\mathcal E(f_{4};P_{\text{NX}})$ & 4 & $-12.44$ & $-14.57$ & $-15.00$ & $-15.14$ & $-17.88$ & $-17.97$ & $-17.95$ & $-25.30$\\
$\lg\mathcal E(f_{4};P)$ & 4 & $-13.16$ & $-15.69$ & $-17.26$ & $-18.05$ & $-19.75$ & $-21.43$ & $-24.32$ & $-24.46$\\ \hline
$\lg\mathcal E(f_{5};P_{\text{NX}})$ & 4 & $-13.24$ & $-15.39$ & $-15.57$ & $-15.67$ & $-18.48$ & $-18.55$ & $-18.55$ & $-26.47$\\
$\lg\mathcal E(f_{5};P)$ & 4 & $-13.81$ & $-16.24$ & $-17.89$ & $-18.30$ & $-20.66$ & $-21.79$ & $-25.12$ & $-24.66$\\ \hline
$\lg\mathcal E(f_{6};P_{\text{NX}})$ & 4 & $-9.77$ & $-11.23$ & $-11.54$ & $-12.13$ & $-12.20$ & $-14.57$ & $-15.92$ & $-17.60$\\
$\lg\mathcal E(f_{6};P)$ & 4 & $-8.93$ & $-10.31$ & $-11.70$ & $-9.55$ & $-11.88$ & $-14.85$ & $-15.56$ & $-17.19$\\ \hline
$\lg\mathcal E(f_{7};P_{\text{NX}})$ & 4 & $-4.32$ & $-4.96$ & $-5.70$ & $-6.17$ & $-6.47$ & $-6.65$ & $-8.06$ & $-9.22$\\
$\lg\mathcal E(f_{7};P)$ & 4 & $-4.53$ & $-4.12$ & $-5.25$ & $-5.68$ & $-6.21$ & $-7.40$ & $-7.05$ & $-8.84$\\ \hline
$\lg\mathcal W(P_{\text{NX}})$ & 12 & $-5.18$ & $-6.07$ & $-6.68$ & $-6.82$ & $-6.92$ & $-6.98$ & $-11.52$ & $-12.01$\\
$\lg\mathcal W(P)$ & 12 & $-6.16$ & $-6.93$ & $-7.89$ & $-8.67$ & $-9.66$ & $-10.73$ & $-11.67$ & $-12.64$\\ \hline
$\lg\mathcal E(f_{0};P_{\text{NX}})$ & 12 & $9.95$ & $8.89$ & $8.00$ & $7.84$ & $7.80$ & $7.76$ & $1.39$ & $0.09$\\
$\lg\mathcal E(f_{0};P)$ & 12 & $8.09$ & $7.19$ & $6.05$ & $4.98$ & $4.15$ & $2.46$ & $1.49$ & $-0.31$\\ \hline
$\lg\mathcal E(f_{1};P_{\text{NX}})$ & 12 & $-0.57$ & $-1.60$ & $-2.43$ & $-2.60$ & $-2.64$ & $-2.69$ & $-8.27$ & $-8.99$\\
$\lg\mathcal E(f_{1};P)$ & 12 & $-2.20$ & $-3.05$ & $-4.12$ & $-5.07$ & $-5.97$ & $-7.35$ & $-8.36$ & $-9.61$\\ \hline
$\lg\mathcal E(f_{2};P_{\text{NX}})$ & 12 & $11.02$ & $10.20$ & $9.77$ & $9.54$ & $9.40$ & $9.25$ & $6.00$ & $5.45$\\
$\lg\mathcal E(f_{2};P)$ & 12 & $10.58$ & $9.91$ & $9.07$ & $8.53$ & $7.53$ & $6.80$ & $5.84$ & $5.18$\\ \hline
$\lg\mathcal E(f_{3};P_{\text{NX}})$ & 12 & $-6.14$ & $-7.34$ & $-8.32$ & $-8.64$ & $-8.97$ & $-9.27$ & $-12.74$ & $-13.51$\\
$\lg\mathcal E(f_{3};P)$ & 12 & $-7.18$ & $-8.01$ & $-9.01$ & $-10.16$ & $-10.78$ & $-11.86$ & $-12.90$ & $-13.76$\\ \hline
$\lg\mathcal E(f_{4};P_{\text{NX}})$ & 12 & $-10.56$ & $-11.52$ & $-12.07$ & $-12.27$ & $-12.39$ & $-12.41$ & $-16.99$ & $-17.47$\\
$\lg\mathcal E(f_{4};P)$ & 12 & $-11.54$ & $-12.36$ & $-13.28$ & $-14.09$ & $-14.82$ & $-16.17$ & $-17.10$ & $-18.20$\\ \hline
$\lg\mathcal E(f_{5};P_{\text{NX}})$ & 12 & $-10.69$ & $-11.70$ & $-12.33$ & $-12.62$ & $-12.70$ & $-12.71$ & $-18.09$ & $-18.69$\\
$\lg\mathcal E(f_{5};P)$ & 12 & $-12.00$ & $-12.86$ & $-13.97$ & $-14.86$ & $-15.17$ & $-16.99$ & $-17.90$ & $-19.34$\\ \hline
$\lg\mathcal E(f_{6};P_{\text{NX}})$ & 12 & $-13.64$ & $-14.31$ & $-14.93$ & $-15.65$ & $-16.11$ & $-16.62$ & $-17.10$ & $-17.54$\\
$\lg\mathcal E(f_{6};P)$ & 12 & $-13.87$ & $-14.16$ & $-14.83$ & $-15.48$ & $-15.97$ & $-16.45$ & $-17.30$ & $-18.09$\\ \hline
$\lg\mathcal E(f_{7};P_{\text{NX}})$ & 12 & $-4.06$ & $-4.45$ & $-4.93$ & $-5.50$ & $-6.01$ & $-6.48$ & $-7.02$ & $-7.48$\\
$\lg\mathcal E(f_{7};P)$ & 12 & $-4.00$ & $-4.51$ & $-5.00$ & $-5.52$ & $-5.96$ & $-6.50$ & $-6.95$ & $-7.52$\\ \hline

\end{tabular}
\end{table}

\begin{figure}\begin{minipage}{0.48\hsize}\centering
	\includegraphics[scale=0.5]{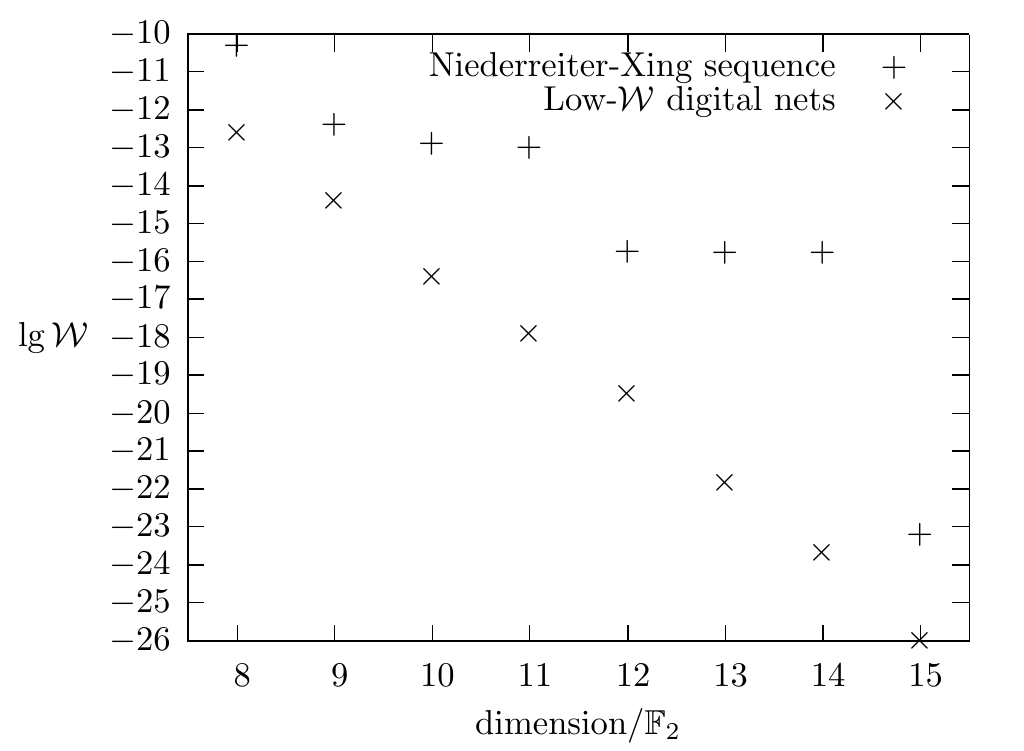}
	\caption{$\mathcal W$ values for $s = 4$.}
	\label{figure:wafom-04}
\end{minipage}\hfil\begin{minipage}{0.48\hsize}\centering
	\includegraphics[scale=0.5]{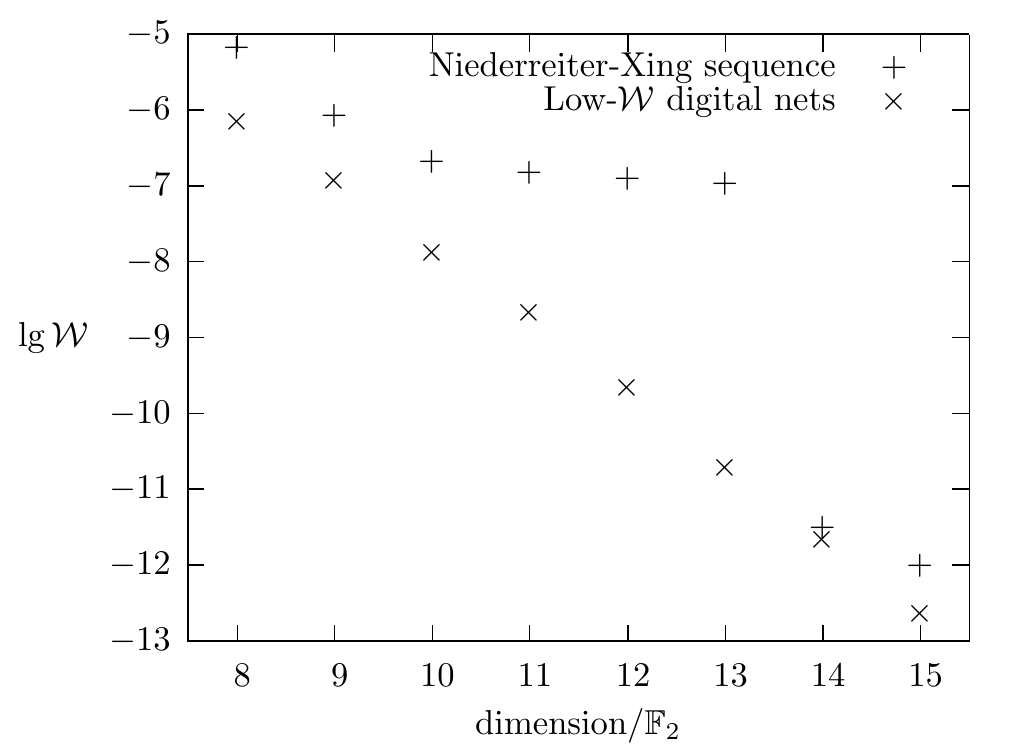}
	\caption{$\mathcal W$ values for $s = 12$.}
	\label{figure:wafom-12}
\end{minipage}\end{figure}
\begin{figure}\begin{minipage}{0.48\hsize}\centering
	\includegraphics[scale=0.5]{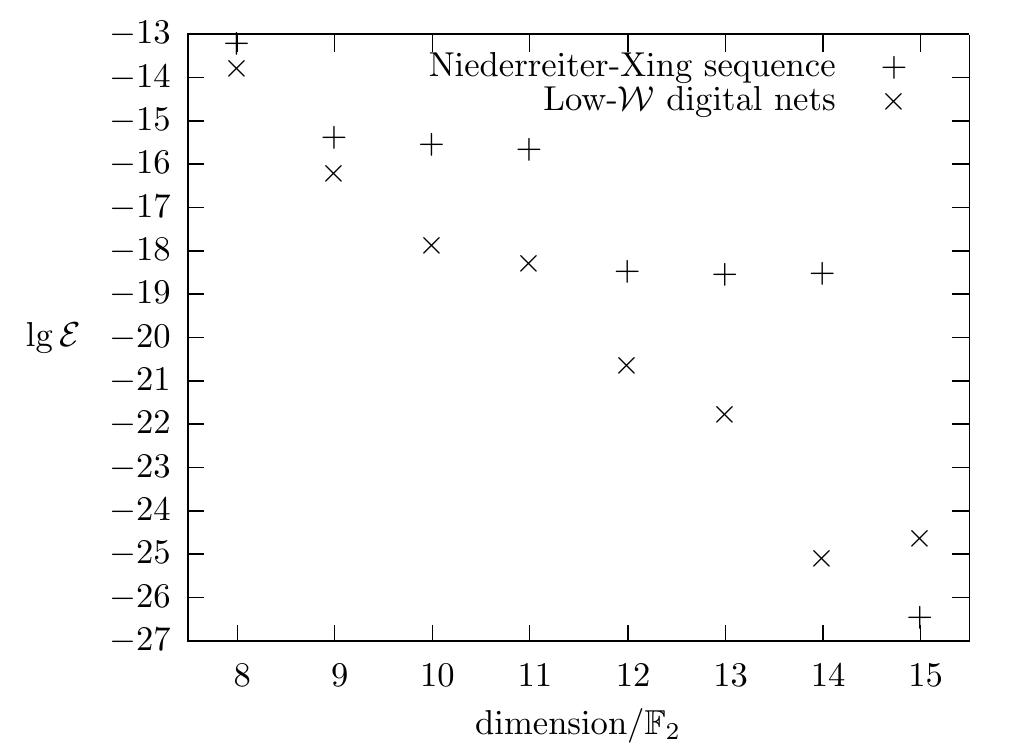}
	\caption{$s = 4$. The integrand is the product peak function $f_5(\boldsymbol x) = \prod_i (x_i^2+1)^{-1}$.}
	\label{figure:04-pp}
\end{minipage}\hfil\begin{minipage}{0.48\hsize}\centering
	\includegraphics[scale=0.5]{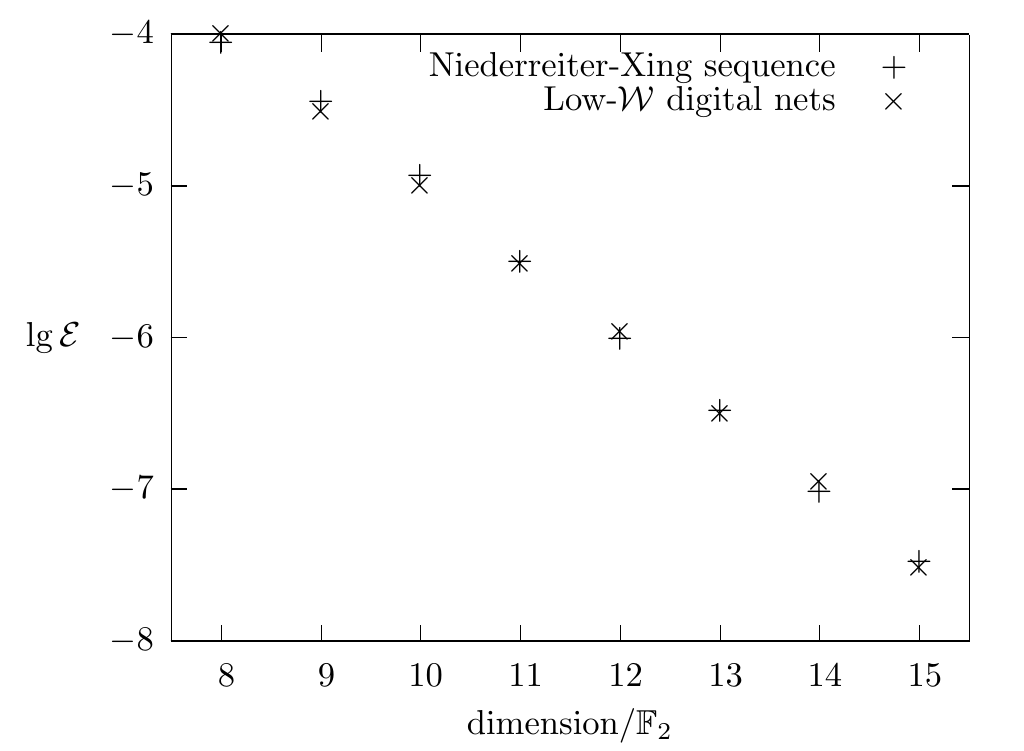}
	\caption{$s = 12$. The integrand is the discontinuous function $f_7(\boldsymbol x) = \prod_i C(x_i)$ where $C(x) = (-1)^{\lfloor3x\rfloor}$.}
	\label{figure:12-dc}
\end{minipage}\end{figure}

\subsection{Discussion}
The first experiment shows that $\mathcal W$ works as a useful bound on $\mathcal E$ for functions tested above.
The other experiment shows that point sets with low $\mathcal W$ values are easy enough to find and perform better for smooth test functions,
while these point sets work as bad as Niederreiter-Xing sequence for non-smooth or discontinuous functions.

\begin{acknowledgement}
The authors would like to thank Prof.\ Makoto Matsumoto for helpful discussions and comments.
The work of T.~G. was supported by Grant-in-Aid for JSPS Fellows No.24-4020.
The works of R.~O., K.~S. and T.~Y. were supported by the Program for Leading Graduate Schools, MEXT, Japan.
\end{acknowledgement}


\begin{thebibliography}{10}
\providecommand{\url}[1]{{#1}}
\providecommand{\urlprefix}{URL }
\expandafter\ifx\csname urlstyle\endcsname\relax
  \providecommand{\doi}[1]{DOI~\discretionary{}{}{}#1}\else
  \providecommand{\doi}{DOI~\discretionary{}{}{}\begingroup
  \urlstyle{rm}\Url}\fi

\bibitem{BD09}
Baldeaux, J., Dick, J.: Q{MC} rules of arbitrary high order: reproducing kernel
  {H}ilbert space approach.
\newblock Constructive Approximation. An International Journal for
  Approximations and Expansions \textbf{30}(3), 495--527 (2009)

\bibitem{Dick2008wsc}
Dick, J.: Walsh spaces containing smooth functions and quasi-{M}onte {C}arlo
  rules of arbitrary high order.
\newblock SIAM Journal on Numerical Analysis \textbf{46}(3), 1519--1553 (2008)

\bibitem{Dick2009dwc}
Dick, J.: The decay of the {W}alsh coefficients of smooth functions.
\newblock Bulletin of the Australian Mathematical Society \textbf{80}(3),
  430--453 (2009)

\bibitem{Dick2010dna}
Dick, J., Pillichshammer, F.: Digital nets and sequences: Discrepancy theory
  and quasi-Monte Carlo integration.
\newblock Cambridge University Press, Cambridge (2010)

\bibitem{HOX}
Harase, S., Ohori, R.: A search for extensible low-{WAFOM} point sets.
\newblock arXiv:1309.7828 (2013)

\bibitem{LL02}
L'Ecuyer, P., Lemieux, C.: Recent advances in randomized quasi-{M}onte {C}arlo
  methods.
\newblock In: Modeling uncertainty, \emph{International Series in Operations Research and Management Science}, vol.~46, pp. 419--474. Kluwer Academic Publishers, Boston, MA (2002)

\bibitem{MacWilliams1977tec}
MacWilliams, F.J., Sloane, N.J.A.: The theory of error-correcting codes. {I}.
\newblock North-Holland Publishing Co., Amsterdam (1977).
\newblock North-Holland Mathematical Library, Vol. 16

\bibitem{MatsumotoSaitoMatoba}
Matsumoto, M., Saito, M., Matoba, K.: A computable figure of merit for
  quasi-{M}onte {C}arlo point sets.
\newblock Mathematics of Computation \textbf{83}(287), 1233--1250 (2014)

\bibitem{Niederreiter1992rng}
Niederreiter, H.: Random number generation and quasi-{M}onte {C}arlo methods,
  \emph{CBMS-NSF Regional Conference Series in Applied Mathematics}, vol.~63.
\newblock Society for Industrial and Applied Mathematics (SIAM), Philadelphia,
  PA (1992)

\bibitem{DNMPS}
Nuyens, D.: The ``magic point shop'' of QMC point generators and generating vectors,
\newblock \url{http://people.cs.kuleuven.be/~dirk.nuyens/qmc-generators/}

\bibitem{Serre1977lrf}
Serre, J.P.: Linear representations of finite groups.
\newblock Springer-Verlag, New York (1977).
\newblock Translated from the second French edition by Leonard L. Scott,
  Graduate Texts in Mathematics, Vol. 42

\bibitem{SuzukiMW}
Suzuki, K.: {WAFOM} on abelian groups for quasi-{M}onte {C}arlo point sets.
\newblock arXiv:1403.7276
  (2014)

\bibitem{Yoshiki}
Yoshiki, T.: Bounds on the Walsh coefficients by dyadic difference and
an improved figure of merit for QMC.
\newblock Talk at Eleventh International
Conference on Monte Carlo and Quasi-Monte Carlo Methods in
Scientific Computing (MCQMC2014) (2014)

\end{thebibliography}

\end{document}